\newcommand{\arxiv}[2][]{\ifthenelse{\equal{#1}{}}
{\href{http://arxiv.org/abs/#2}{\tt arXiv:#2}}
{\href{http://arxiv.org/abs/math/#2}{\tt arXiv:math.#1/#2}}}
\renewcommand\subsection{\@startsection
{subsection}{2}{0cm} 
{-\baselineskip}     
{0.5\baselineskip}   
{\sffamily}} 
\theoremstyle{plain}
\newtheorem{theorem}{Theorem}[section]
\newtheorem{lemma}[theorem]{Lemma}
\newtheorem{corollary}[theorem]{Corollary}
\newtheorem{proposition}[theorem]{Proposition}
\newtheorem{problem}[theorem]{Problem}
\theoremstyle{definition}
\newtheorem{example}[theorem]{Example}
\newtheorem{definition}[theorem]{Definition}
\newtheoremstyle{remark}
{}{}{}{}{\itshape}{}{ }{\thmname{#1}\thmnumber{ \itshape #2.}}
\theoremstyle{remark}
\newtheorem{remark}[theorem]{Remark}
\def\R{\mathbb{R}} \def\Z{\mathbb{Z}} \def\C{\mathbb{C}} \def\tl{\tilde}
\def\x{\times} \def\but{\setminus} \def\emb{\hookrightarrow} 
\def\eps{\varepsilon} \def\phi{\varphi}
\def\xr#1{\xrightarrow{#1}} \def\xl#1{\xleftarrow{#1}} \renewcommand{\:}{\colon}
\DeclareMathOperator{\Deg}{deg} \DeclareMathOperator{\Aut}{Aut}
\begin{document}

\title{Transverse fundamental ~group and projected ~embeddings}
\author{Sergey A. Melikhov}
\address{Steklov Mathematical Institute of the Russian Academy of Sciences,
ul.\ Gubkina 8, Moscow, 119991 Russia}
\email{melikhov@mi.ras.ru}
\thanks{This work is supported by the Russian Science Foundation under grant 14-50-00005.}

\begin{abstract}
For a generic degree $d$ smooth map $f\:N^n\to M^n$ we introduce its ``transverse fundamental group'' $\pi(f)$,
which reduces to $\pi_1(M)$ in the case where $f$ is a covering, and in general admits a monodromy homomorphism
$\pi(f)\to S_{|d|}$; nevertheless, we show that $\pi(f)$ can be non-trivial already for rather simple degree $1$
maps $S^n\to S^n$.

We apply $\pi(f)$ to the problem of lifting $f$ to an embedding $N\emb M\x\R^2$: for such a lift to exist,
the monodromy $\pi(f)\to S_{|d|}$ must factor through the group of concordance classes of $|d|$-component
string links.
At least if $|d|<7$, this requires $\pi(f)$ to be torsion-free.
\end{abstract}

\maketitle

\section{Introduction}\label{1}

A generic $C^\infty$ map $f\:N\to M$ is called a {\it $k$-projected embedding}, or a {\it $k$-prem} if
there exists a $g\:N\to\R^k$ such that $f\x g\:N\to M\x\R^k$ is a smooth embedding (cf.\ \cite{ARS},
\cite{AMR}).

A necessary condition for a generic smooth map $f\:N^n\to M^m$ to be a $k$-prem
is the existence of a $\Z/2$-equivariant map from the double point set
$\Delta_f=\{(x,y)\in N\x N\mid f(x)=f(y),\, x\ne y\}$ (endowed with the restriction
of the factor exchanging involution on $N\x N$) to the sphere $S^{k-1}$
(endowed with the antipodal involution $x\mapsto -x$).
Indeed, given a $g\:N\to\R^k$ as above, we define $\phi\:\Delta_f\to S^{k-1}$ by
$\phi(x,y)=\frac{g(x)-g(y)}{||g(x)-g(y)||}$; clearly, $\phi(y,x)=-\phi(x,y)$.
One can show that when $4n-3m\le k$ and $m+k\ge\frac{3(n+1)}2$, this necessary condition is also
sufficient \cite{M2}.

Using this result, it is not hard to see that every generic smooth map $f$ between orientable smooth
$n$-manifolds is an $n$-prem for all even $n>2$; indeed, an equivariant map
$\Delta_f\to S^{n-1}$ exists for all even $n$ (including $n=2$) as observed in the proof of 
Theorem \ref{1.2} below.

\begin{problem}\label{1.1} Is every generic smooth map between orientable
surfaces a $2$-prem?
\end{problem}

It is known that the answer is affirmative in the following cases: for maps of any $2$-manifold
into $\R^2$ \cite{Ya}; for maps of $S^2$ into any orientable $2$-manifold
(Yamamoto--Akhmetiev \cite{M1}); and for maps $S^1\x S^1\to S^1\x S^1$ \cite{KW}.

Petersen proved that the answer is also affirmative for all regular coverings of degree $<60$
\cite{Pe}.
This is established as follows: a group of order $<60$ is
solvable; a solvable covering is a composition of abelian coverings; an abelian
covering over a compact polyhedron with free abelian $H_1$ is induced from
a covering over a torus $S^1\x\dots\x S^1$; a covering over a torus is equivalent
by a change of coordinates to a product of coverings over $S^1$; a product of
coverings over $S^1$ is a composition of coverings over the torus induced from
coverings over $S^1$; and a covering over $S^1$ is obviously a $2$-prem.

Petersen also proved that if a composition of two coverings is a $2$-prem, then each of them is 
a $2$-prem \cite{Pe}.
Let us note that the regular covering corresponding to the kernel of the monodromy homomorphism
of a covering $p$ factors through $p$.
Consequently, every covering with solvable monodromy group (hence in particular every covering of 
degree $<5$) between orientable surfaces is a $2$-prem.

Let us also note that a covering over a connected sum of tori is a 2-prem if it is
induced from a covering of the wedge of these tori; indeed, {\it any} covering induced from
a covering over a wedge of tori is a 2-prem (see Theorem \ref{2.7} below and
the subsequent remarks).

Finally, we should note that one motivation of Problem \ref{1.1} is that its affirmative solution
would yield an affirmative answer to the following

\begin{problem}\label{1.4} \cite{CF}, \cite{M1} Does every inverse limit of orientable
$2$-manifolds embed in $\R^4$?
\end{problem}

P. M. Akhmetiev proved that an inverse limit of stably parallelizable
$n$-manifolds embeds in $\R^{2n}$ for $n\ne 1,2,3,7$ \cite{Ah} (see \cite{M1}
for an explicit proof).
It is well-known that the $p$-adic solenoid, which is an inverse limit of
copies of $S^1$, does not embed in the plane.
Akhmetiev also constructed inverse limits of $3$- and $7$-dimensional
parallelizable manifolds that do not embed in $\R^6$, resp.\ $\R^{14}$
(see \cite{MS}).

\subsection{Content of the paper}

It is clear from Theorem \ref{1.2} below that Problem \ref{1.1} is a typical problem of 
four-dimensional topology%
\footnote{Keeping in mind, say, the 4-dimensional PL Poincar\'e conjecture,
the Andrews--Curtis conjecture, the problem of PL embeddability of
acyclic and contractible 2-polyhedra in $\R^4$, etc.} in that there is no lack
of potential counterexamples (such as 5-fold coverings and regular
60-fold coverings, not to mention generic approximations of various branched
coverings) but an obviuous lack of invariants/obstructions capable of detecting
actual counterexamples.

The present note develops one approach to constructing such an obstruction
in the case of generic maps other than coverings.
The ``transverse fundametal group'' $\pi(f)$ of a generic smooth map $f\:M\to N$
between manifolds of the same dimension is introduced in \S\ref{2}.
In the case where $f$ is a covering, $\pi(f)$ specializes to $\pi_1(M)$ and so gives nothing new.
On the other hand, we compute, for instance, that $\pi(f)$ contains an infinite cyclic subgroup for 
a certain fold map $f\:S^2\to S^2$, which is a generic $C^0$-approximation of the suspension
of the double covering $S^1\to S^1$.
This and other examples are studied in \S\ref{3}.

The following is a special case of Corollary \ref{2.10}.

\begin{theorem}\label{1.3} Let $f$ be a generic smooth map of degree $<7$ between
orientable $2$-manifolds.
If $\pi(f)$ contains torsion, then $f$ is not a $2$-prem.
\end{theorem}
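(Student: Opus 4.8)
The plan is to prove the contrapositive: if a generic map $f\:N\to M$ between orientable surfaces, with $d=\Deg f$ and $|d|<7$, is a $2$-prem, then $\pi(f)$ is torsion-free. Everything will rest on two facts. The first, which is the geometric substance of the general statement Corollary \ref{2.10} (sketched below), is that a $2$-prem structure on $f$ makes the monodromy $\mu\:\pi(f)\to S_{|d|}$ lift, through the ``underlying permutation'' homomorphism $p\:C_{|d|}\to S_{|d|}$, to a homomorphism $\tl\mu\:\pi(f)\to C_{|d|}$, where $C_{|d|}$ is the group of concordance classes of $|d|$-component string links. The second --- verified in the proof of Corollary \ref{2.10}, and the only place where the hypothesis $|d|<7$ enters --- is that $C_{|d|}$ is torsion-free for $|d|\le 6$. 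Granting these, let $a\in\pi(f)$ have finite order; then $\tl\mu(a)$ is a torsion element of $C_{|d|}$, hence trivial, so $a\in\ker\tl\mu\subseteq\ker\mu$. Now $\ker\mu$ should itself be torsion-free: this is built into the construction of $\pi(f)$ in \S\ref{2}, which realizes $\ker\mu$ much as the kernel of an ordinary covering monodromy is realized by the fundamental group of a surface (compare the remarks on regular coverings recalled in \S\ref{1}); in particular the cases $|d|\le 1$, where $\mu$ is trivial, are already covered. Hence $a=1$, and $\pi(f)$ is torsion-free.

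To produce $\tl\mu$: fix $g\:N\to\R^2$ with $F=f\x g\:N\emb M\x\R^2$ a smooth embedding, a regular base point $y_0\in M$ off the critical-value set $\Sigma\subset M$ of $f$, and a coordinate disk $B\ni y_0$; then $F$ carries the sheets of $f$ over $B$ to pairwise disjoint disks in $B\x\R^2\cong D^4$, of which \S\ref{2} singles out $|d|$ as carrying $\mu$. For a loop $\gamma$ in $M$ transverse to $\Sigma$ and representing a class of $\pi(f)$, transporting this labelled base-point configuration along $\gamma$ by means of the embedded sheets of $N$ sweeps out, in the $\R^2$-fibres over $\gamma$, a properly embedded $1$-manifold in $\R^2\x[0,1]$ with underlying permutation $\mu([\gamma])$: a braid when $\gamma$ avoids $\Sigma$, and in general a $|d|$-strand string link $L_\gamma$ once one accounts for the reorganisation of sheets forced along $\Sigma$ (and for the choices needed to follow the $|d|$ distinguished sheets across $\Sigma$, the fibre cardinality not being locally constant). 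Crucially, the defining relations of $\pi(f)$ are carried by this rule to \emph{null-concordances}: such a relation comes in \S\ref{2} from a disk in $M$, and $F$ maps that disk to a properly embedded surface in $M\x\R^2$, i.e.\ to an embedded cobordism between the corresponding string links --- which is exactly why the target is $C_{|d|}$ and not merely the braid group. Thus $[\gamma]\mapsto[L_\gamma]$ is a well-defined homomorphism $\tl\mu\:\pi(f)\to C_{|d|}$ with $p\circ\tl\mu=\mu$; it refines the classical necessary condition that produces the equivariant map $\Delta_f\to S^1$.

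The main obstacle is the claim just sketched: that passing $\gamma$ transversally through a fold curve --- or, in the worst case, through a cusp --- of $f$, and more generally realising each defining relation of $\pi(f)$, changes the associated string link only within its concordance class. This needs a local normal form for how the embedded sheets of $N$ reorganise near $\Sigma$ under $F$ (the behaviour of $F$ over a generic fold arc and over a cusp), together with a verification that the resulting elementary moves are realised by \emph{pairwise disjoint} embedded tubes in $\R^2\x[0,1]\x[0,1]$; getting a genuine concordance, rather than some coarser equivalence, out of the embeddedness of $F$ is precisely where the dimension count ($2$ transverse coordinates, hence $1$-dimensional strands in $3$-space) and the genericity of $f$ are used. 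The other genuinely non-trivial ingredient is the torsion-freeness of $C_{|d|}$ for $|d|\le 6$: this is a statement purely about low-component string-link concordance, the bound $|d|<7$ marks its range of validity, and sharpening Theorem \ref{1.3} would mean pushing that bound.
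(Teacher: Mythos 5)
The central step of your argument is false, and it is exactly the step to which you attach the bound $|d|<7$. The group you call $C_{|d|}$ --- concordance classes of $|d|$-component string links (the paper's $T_{|d|}$, with the pure part $C_d=\ker(T_d\to S_d)$) --- is \emph{not} torsion-free for any number of strands: tying a figure-eight knot into one strand of the trivial string link gives an element of order two in $C_d\subset T_d$, since the figure-eight knot is negative amphichiral but not slice (already $C_1$ is the knot concordance group, which has $2$-torsion). So ``$\tl\mu(a)$ is a torsion element of $C_{|d|}$, hence trivial'' fails, and the bound $|d|<7$ has nothing to do with string-link concordance per se. Your first ingredient is fine and is essentially the paper's Proposition \ref{2.6} (the pullback of an $f$-transverse loop embeds in $I\x\R^2$, giving a string link, and a coherent homotopy gives a concordance). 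But the paper then passes to a coarser quotient: concordance implies link homotopy (Giffen, Goldsmith), and string links up to link homotopy are governed by the homotopy braid group $HB_d$ (Goldsmith, Habegger--Lin), so the lifted monodromy descends to $\pi(f)\to HB_d$ compatibly with the projection to $S_d$. The torsion-freeness that carries the hypothesis $d<7$ is Humphries' theorem about $HB_d$: it is torsion-free for $d<7$, and more precisely any element whose image in $S_d$ has order divisible by $2$, $3$ or $5$ has infinite order in $HB_d$; since every nontrivial permutation in $S_d$ with $d\le 6$ has order divisible by $2$, $3$ or $5$, this yields Corollary \ref{2.10} and hence Theorem \ref{1.3}.

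A second unsupported step is your claim that $\ker\mu$ is torsion-free ``by construction,'' by analogy with covering-space monodromy. Nothing in \S\ref{2} provides this: $\pi(f)$ is not the fundamental group of any space here, the paper explicitly states that it does not know whether $\pi(f)$ can contain torsion even when $\pi_1(M)$ is torsion-free, and Corollary \ref{2.10} only obstructs torsion elements whose monodromy in $S_{|\Deg f|}$ is nontrivial (of order divisible by $2$, $3$ or $5$). So even after repairing the concordance-group error along the lines above, your argument establishes the conclusion only for torsion with nontrivial monodromy --- which is the actual content the paper extracts from Humphries via Corollary \ref{2.10} --- and the asserted torsion-freeness of $\ker\mu$ would need a genuinely new argument that neither you nor the paper supplies.
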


The author does not know if $\pi(f)$ can contain torsion when $\pi_1(M)$ is torsion-free;
generally speaking, nothing seems to preclude from this.

From the viewpoint of algebraic topology, the elements of $\pi(f)$ are
analogous, or rather dual, to spherical classes in the $2$-homology of
a $4$-manifold (see Remark \ref{2.1}).
Even though the technique enabling us to show that $\pi(f)$ is well-defined
was originally developed in the course of a study of projected embeddings \cite{M1},
the present paper was written with hope that $\pi(f)$ may also find other
applications.

\subsection{A motivation: The double point obstruction}
One may look at the following straightforward obstruction to \ref{1.1}.
Let us consider, more generally, a generic smooth map $f\:N^n\to M^n$.
Take a generic lift $\bar f\:N\to M\x\R^n$ of $f$ and pick some
basepoint $b\in N$.
Each (necessarily isolated) double point $z=\bar f(x)=\bar f(y)$ of $\bar f$ has
a sign $\eps_z=\pm 1$ determined by comparing the orientations of the two
sheets of $N$ with the orientation of $M\x\R^n$.
Let us pick a path $p_x$ joining $b$ and $x$ and a path $p_y$ joining
$y$ and $b$.
Then $f(p_x p_y)$ is an $f(b)$-based loop in $M$.
The class $g_z\in G:=\pi_1(M,f(b))$ of this path is well defined up to
multiplication on both sides by elements of $H:=f_*(\pi_1(N))$.
Let $\theta(\bar f)$ be the algebraic sum
$$\sum_z \eps_zHg_zH\in\Z[H\backslash G/H]$$ of
the resulting double cosets.
If $\bar f'$ is another generic lift of $f$, a generic homotopy between
$\bar f$ and $\bar f'$ over $f$ yields an oriented bordism between the set
of double points of $\bar f$ and that of $\bar f'$.
The critical levels of this bordism consist of cancellations/introductions of
pairs $(z,z')$ such that $\eps_z=-\eps_{z'}$ and $g_z=g_{z'}$;
and (unless $f$ is a covering) of births/deaths of individual double points
$z$ such that $g_z\in H$.
Hence $\theta(f):=\theta(\bar f)=\theta(\bar f')$ is well defined.
Obviously, if $f$ is an $n$-prem, $\theta(f)=0$.

\begin{theorem}\label{1.2} If $n$ is even, $\theta(f)=0$ for every generic smooth $f\:N^n\to M^n$.
\end{theorem}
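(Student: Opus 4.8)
\emph{Proof sketch (proposed).}
The plan is to evaluate $\theta(f)$ on a conveniently chosen lift and read it off as a signed count of zeros of a difference map over the double point set $\Delta_f=\{(x,y)\in N\x N\mid f(x)=f(y),\ x\ne y\}$. Fix a generic lift $\bar f=(f,g)\:N\to M\x\R^n$; its target has dimension $2n$, so $\bar f$ is an immersion with only isolated transverse double points, and a pair $(x,y)$ with $\bar f(x)=\bar f(y)$, $x\ne y$, is precisely a zero of $s\:\Delta_f\to\R^n$, $s(x,y)=g(x)-g(y)$. Two preliminary remarks. First, pushing the orientation comparison that defines $\eps_z$ through the factor exchange $(x,y)\leftrightarrow(y,x)$ changes the sign by $(-1)^n$ (the sign of the permutation swapping two $n$-frames), so for $n$ even $\eps_z$ is insensitive to the ordering of the two preimages --- this is what lets $\theta$ be defined in the first place. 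Second, the double coset $Hg_{(x,y)}H$ is \emph{locally constant} on $\Delta_f$ (a path in $\Delta_f$ gives loops $\alpha$ at $x$ and $\beta$ at $y$ with $f\alpha=f\beta$, so the representing loops may be kept equal as $(x,y)$ varies), and it degenerates to the trivial coset $H$ as $(x,y)$ tends to the diagonal: if $x$ and $y$ both lie near a fold point $x_0$, the loop $f(p_xp_y)$ can be taken to be $f$ of (a fixed arc $b\to x_0$)(short arcs near $x_0$)(the reverse arc $x_0\to b$), whose short middle part is null-homotopic because $f$ folds a neighbourhood of $x_0$ onto a disc. Hence, writing $F\subset\bar\Delta_f$ for the part of the closure lying on the diagonal (it sits over the fold locus $\Sigma\subset N$), every component $\Delta_f^D$ of $\Delta_f$ with $D\ne H$ is disjoint from $F$, so its closure $\bar\Delta_f^D$ is a \emph{closed} oriented $n$-manifold; orientability is inherited from the embedding $\Delta_f\emb N\x N$, whose normal bundle is $\cong f^*TM$.

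For $D\ne H$ the coefficient of $D$ in $\theta(f)$ then vanishes: $s|_{\bar\Delta_f^D}$ maps a closed oriented $n$-manifold into the contractible $\R^n$, so the algebraic number of its zeros is $0$; and a linear-algebra comparison gives $\eps_z=\pm(\text{product of the local degrees of }f\text{ at }x\text{ and }y)\cdot\operatorname{ind}_z s$, where the overall sign and the degree product are constant on each component (for a generic lift no double point of $\bar f$ has a singular preimage). Thus $\sum_{z\in\Delta_f^D}\eps_z=0$.

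It remains to treat the trivial coset $H$, where one works on the components $Y$ of $\Delta_f$ that meet $F$. On such a $Y$ the map $s$ extends continuously over $\bar Y=Y\cup F_Y$ with $F_Y\subset\Sigma$, it vanishes identically on $F_Y$, and --- since $\bar f$ is an immersion, $dg$ is injective on the fold kernel $\ker df|_\Sigma$ --- its derivative transverse to $F_Y$ is $2\,dg(v)$ up to scale, $v$ spanning $\ker df$; in particular $s$ has no zeros near $F_Y$ for generic $g$. So the contribution of $Y$ to the coefficient of $H$ is, up to a universal and a component-constant sign, the degree of the ``outward'' direction field $\overline{dg(v)}$ along $\partial\bar Y=F_Y$. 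What I would prove is that this degree is zero for even $n$: the field $\overline{dg(v)}$ is equivariantly homotopic to the normal direction field of the immersed hypersurface $g(F_Y)\imm\R^n$, and the resulting obstruction dies on the odd-dimensional manifold $\Sigma$ --- equivalently, this is the promised equivariant map $\Delta_f\to S^{n-1}$ for even $n$, which exists because the components of $\Delta_f$ meeting $F$ are homotopy equivalent to $(n-1)$-complexes. Granting this, $\theta(f)=0$.

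The real obstacle is the local analysis near $\Sigma$: checking that, away from cusps and higher singularities (which lie in lower dimension and contribute nothing), $\bar\Delta_f$ is a closed manifold on which $s$ extends vanishing on $F$ with transverse derivative $dg$ of the fold kernel, and then arranging all the orientation bookkeeping --- $\eps_z$ versus $\operatorname{ind}_z s$, the two sides of $F$, the factor-exchange pairing --- so that the trivial-coset contribution really reduces to the vanishing obstruction on the odd-dimensional fold locus.
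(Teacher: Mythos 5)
Your route is genuinely different from the paper's, and for the non-trivial double cosets it essentially works. The paper never decomposes $\Delta_f$ by the class $g_z$: it proves $e(\lambda)^n=0$ (using that $\Delta_f/T$ is orientable and that the coefficients $\Z_T^{\otimes n}$ are untwisted for even $n$), converts this into an equivariant \emph{oriented null-bordism} $W\subset\Delta_f$ of the $0$-manifold $\Delta_{\bar f}$, and reads off $g_{z'}=g_z$ at the two ends of each compact component of $W$ (and $g_z\in H$ at the single end of each noncompact one). Your observations that $g_z$ is constant on components of $\Delta_f$, that any component whose closure meets the diagonal carries the trivial coset, and that the remaining components are \emph{closed} oriented $n$-manifolds on which the signed number of zeros of $s(x,y)=g(x)-g(y)$ vanishes, are all correct, and give a more elementary argument for the non-trivial cosets (indeed one in which evenness of $n$ is used only to make $\eps_z$ well defined). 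One step, however, is wrong as stated: the product of the local degrees of $f$ at $x$ and $y$ is \emph{not} constant on a component of $\Delta_f$ ($x$ or $y$ may cross $\Sigma_f$ while the pair stays in $\Delta_f$), and in fact it should not appear at all. If you orient $\Delta_f$ as the preimage of $\Delta_M$ (normal bundle $\cong f^*TM$), a block-determinant computation shows $\eps_z=\pm\operatorname{ind}_z s$ with a sign depending only on conventions: the $\operatorname{sign}\det(df_y)$ factor coming from the orientation transport cancels against the one in the Schur complement. With that repair your count $\sum_{z\in\Delta_f^D}\eps_z=0$ for $D\ne H$ is sound.

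The genuine gap is the trivial coset, i.e.\ the components approaching the diagonal, and you flag it yourself. First, $\bar\Delta_f$ is not a manifold with boundary $F$: near a fold point the closure is a manifold in which $F$ is an \emph{interior} hypersurface with $T$ acting as a reflection (so ``$\partial\bar Y=F_Y$'' already fails when $Y=T(Y)$), and for $n\ge4$ a generic $f$ has $\Sigma^2$-points, which are not Morin and for which you have not controlled the closure at all. Second, the asserted vanishing of the degree of $\overline{dg(v)}$ along $F_Y$ ``for even $n$'' is precisely where the paper has to work: the existence of an equivariant map $\Delta_f\to S^{n-1}$ does not follow merely from the components being homotopy equivalent to $(n-1)$-complexes (that reasoning is parity-independent and ignores the twisted coefficients); in the paper evenness enters through orientability of $\Delta_f/T$ together with the constancy of $\Z_T^{\otimes n}$, yielding $e(\lambda)^n=0$ and hence the null-bordism --- and even granting the equivariant map, the deduction that your boundary degree vanishes is nowhere supplied. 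A cheaper way to close your argument: as the well-definedness discussion in \S1 forces, when $f$ is not a covering the coefficient of $H$ is not an invariant of $f$ (individual double points are born and die through the fold locus), so only the cosets $D\ne H$ need to vanish, and your main argument handles exactly those; when $f$ is a covering, $\Delta_f$ is closed and the same argument disposes of $H$ as well. With that remark and the corrected sign identity your proof is complete, but as submitted the near-$\Sigma_f$ section is a plan rather than a proof.
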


\begin{proof} Since the dimensions of $N$ and $M$ have the same parity,
$\Delta_f/T$ is orientable, where $T$ is the factor exchanging involution
on $N\x N$ (see e.g.\ \cite{M1}*{Akhmetiev's Lemma (preceding Example 5) or the beginning of \S3}).
If $\lambda$ is the line bundle associated with the double covering
$\Delta_f\to\Delta_f/T$, its Euler class $e(\lambda)$ is an element of order two 
in the cohomology group $H^1(\Delta_f/T;\,\Z_T)$ with local coefficients
(see e.g.\ \cite{M3}*{\S2}).
Since $\Delta_f/T$ is orientable, whereas the coefficients $\Z_T^{\otimes n}$ 
are constant when $n$ is even, $H^n(\Delta_f/T;\,\Z_T^{\otimes n})$ is free abelian,
and therefore $e(\lambda)^n=0$.%
\footnote{This immediately implies the existence of an equivariant map $\Delta_f\to S^{n-1}$ 
(see e.g.\ \cite{M3}*{Alternative proof of Theorem 3.2}).}

This yields an equivariant oriented null-bordism $W$ of the oriented
$0$-manifold $\Delta_{\bar f}$ in $\Delta_f$ (see e.g.\ \cite{M1}*{Lemma 7} or 
\cite{M3}*{\S3, subsections ``Geometric definition of $\vartheta(f)$'' and
``Cohomological sectional category''}).
Without loss of generality $W$ has no components without boundary.
By the definition of $\Delta_f$, we have $fpT(a)=f(y)=f(x)=fp(a)$ for each
$a=(x,y)\in\Delta_f$, where $p$ projects $N\x N$ onto the first factor.
Since $W\subset\Delta_f$, it follows that $fpT|_J=fp|_J$ for each component
$J$ of $W$.

Let $c=(x,y)$ and $d=(x',y')$ be the endpoints of a compact component $J$ 
corresponding to
double points $z=\bar f(x)=\bar f(y)$ and $z'=\bar f(x')=\bar f(y')$ of
$\bar f$, so that $\eps_z=-\eps_{z'}$.
Let $p_x$ be a path joining $b$ to $x=p(c)$ and $p_y$ a path joining
$y=pT(c)$ to $b$.
Then $p_x$ followed by $p|_J$ is a path $p_{x'}$ joining $b$ to $x'=p(d)$ and
the inverse of $pT|_J$ followed by $p_y$ is a path $p_{y'}$ joining $y'=pT(d)$ 
to $b$.
Now $fpT|_J=fp|_J$ implies that $f(p_{x'}p_{y'})$ is homotopic to
$f(p_xp_y)$, whence $g_{z'}=g_z$.

Similarly if $J$ is noncompact and so has only one endpoint $a=(x,y)$
corresponding to a double point $z=\bar f(x)=\bar f(y)$, then $g_z\in H$.
\end{proof}

The author is grateful to P. Akhmetiev and M. Yamamoto for very valuable remarks.
The paper also benefited from stimulating conversations with N. Brodskiy,
V. Chernov, J. Keesling,
E. Kudryavtseva, S. Maksymenko, R. Mikhailov and R. Sadykov.

\section{In search of non-$2$-prems}\label{2}

\subsection{Transverse fundamental group}

\begin{definition}[Pullback]
If $L$, $M$ and $N$ are smooth manifolds and $L\xr{g}M\xl{f}N$ are smooth maps,
we say that $g$ is {\it transverse} to $f$ and write $g\pitchfork f$ if
$f\x g\:N\x L\to M\x M$ is transverse to $\Delta_M$.
In this case $P:=(f\x g)^{-1}(\Delta_M)$ is a smooth submanifold of $N\x L$,
and consequently the composition of the inclusion $P\emb N\x L$ and
the projection $N\x L\to N$ is a smooth map; if additionally $f$ is generic,
then so is this composition.
This composition is called the {\it pullback} (or the ``base change map'')
of $f$ along $g$ and is denoted $g^*f$,
and its domain $P$ (also known as the {\it pullback} of the diagram $L\xr{g}M\xl{f}N$)
may be denoted $(g^*f)^{-1}(L)$.
Note that if $g$ is an embedding, then so is $f^*g$, which therefore performs
a homeomorphism between $(g^*f)^{-1}(L)=(f^*g)^{-1}(N)$ and $f^{-1}(g(L))$.
\end{definition}

\begin{definition}[Coherent homotopy]
Let $f\:N\to M$ be a generic smooth map between closed oriented connected
$n$-manifolds, $n\ge 1$, and let $b\in M$ be its regular value (in particular,
$b$ is a value of $f$, i.e.\ $b\in f(N)$.)
Consider $f$-transverse based loops $l_0,l_1\:(S^1,pt)\to (M,b)$.
A based homotopy $h\:(S^1\x I,pt\x I)\to (M,b)$ between $l_0$ and $l_1$
will be called {\it $(b,f)$-coherent} if it is $f$-transverse and every
connected component of the pullback $(h^*f)^{-1}(S^1\x I)$ that intersects
$(h^*f)^{-1}(pt\x I)$ is an annulus with one boundary component in
$(l_0^*f)^{-1}(S^1)$ and another in $(l_1^*f)^{-1}(S^1)$.
Note that some individual levels $h_t\:S^1\to M$ of a $(b,f)$-coherent homotopy
may be non-$f$-transverse, and the number of components in $(h_t^*f)^{-1}(S^1)$
may vary depending on $t$.
\end{definition}

\begin{definition}[$\pi(f)$: The case of unfolded basepoint]
Suppose first the cardinality $|f^{-1}(b)|$ equals the absolute value $|\Deg f|$
(so in particular $\Deg f\ne 0$, since we are assuming that $b\in f(N)$).
The set $\pi(f,b)$ of $b$-based $f$-transverse loops in $M$ up to $(b,f)$-coherent
homotopy is clearly a group with respect to the usual product (i.e.\
the concatenation) of loops and the usual inverse of a loop.
\end{definition}

\begin{example}
If the generic map $f\:N\to M$ is a covering, $\pi(f,b)\simeq\pi_1(M)$ since
coverings enjoy the covering homotopy property.

On the other hand, there exists, for instance, a fold map $f\:S^2\to S^2$ with
four fold curves and with $|f^{-1}(b)|=1$ such that $\pi(f)\ne 1$ (see
Examples \ref{3.2.1}, \ref{3.2.4}).
\end{example}

\begin{remark}\label{2.1} Note that the Pontryagin construction identifies $f$-transverse
framed loops in $M$ with stable maps of the mapping cylinder of $f$
into $S^{n-1}$ that are transverse to $pt\in S^{n-1}$.
Of course, homotopies between such maps, transverse to $pt$, are identified
with arbitrary $f$-transverse framed bordisms, which are not necessarily
homotopies (not to mention coherent homotopies).
Thus from the viewpoint of algebraic topology, the question of existence of
a coherent homotopy is a question of representability of a generalized
cohomology class by a genus zero cocycle extending a given representation
on the boundary.
(By a cocycle we mean a pseudo-comanifold, i.e.\ an embedded mock bundle with
codimension two singularities --- see \cite{BRS}.)
\end{remark}

\begin{definition}[$\pi(f)$: The general case]
Without loss of generality we may assume that $d:=\Deg(f)\ge 0$.
Let $j\:I\to M$ be an $f$-transverse path, and let $J=(j^*f)^{-1}(I)$.
Since $M$ is oriented, $\Delta_M$ is co-oriented in $M\x M$, hence $J$
is co-oriented in $I\x N$.
Since $I$ and $N$ are oriented, $J$ is oriented.

A component $C$ of $J$ is called a {\it positive (negative)} arc if
$j^*f|_C\:(C,\partial C)\to (I,\partial I)$ has degree $+1$ (resp.\ $-1$).
Else $C$ could be a circle or an arc with both endpoints mapping onto
the same endpoint of $I$, with $(j^*f)(C)\ne I$.
Note that the signs of the arcs reverse (along with the sign of $\Deg(f)$) when
the orientation of $M$ or $N$ is reversed; but remain unchanged when
the orientation of $I$ is reversed.
\end{definition}

\begin{lemma}\label{2.2} \cite{M1}*{\S2, proof of Observation 2}
Let $f\:N\to M$ be a generic smooth map between closed oriented connected
$n$-manifolds, $n\ge 1$, with $\Deg(f)\ge 0$.
Then there exists an $f$-transverse path $\ell\:I\to M$ such that
$(\ell^*f)^{-1}(I)$ contains no negative arcs.
\end{lemma}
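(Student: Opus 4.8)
The plan is to start from an arbitrary $f$-transverse path $j\:I\to M$ and then perform a finite sequence of homotopies, each of which either eliminates a negative arc or reduces a suitable complexity measure, until no negative arcs remain. First I would fix notation: for an $f$-transverse path $j$, write $J=(j^*f)^{-1}(I)$ and count $a^+(j)$, $a^-(j)$ the numbers of positive and negative arcs. Since $\Deg(f)=d\ge 0$ and the (co)oriented pullback $J\to I$ has degree $d$ over each interior point of $I$, the algebraic count gives $a^+(j)-a^-(j)=d$; in particular the number of negative arcs equals the number of ``excess'' positive arcs. The goal is to reach the situation $a^-(\ell)=0$, equivalently $a^+(\ell)=d$ and no circle or ``turn-back'' components over the interior.

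The key move is a local cancellation of a negative arc against a positive arc. If $C^-$ is a negative arc and $C^+$ a positive arc of $J$ lying over the \emph{same} subinterval of $I$ and adjacent in $N$ (nothing of $J$ between them in the appropriate local sheet), then the region of $N$ between their images is a band on which $j^*f$ looks like the standard fold; one can then push the path $j$ across $f$ of this band — a homotopy of $j$ in $M$ supported near one interior point — so that after the homotopy the two arcs $C^+\cup C^-$ are replaced by a single component that maps to a subinterval of $I$ \emph{not} containing an endpoint, i.e.\ a turn-back arc, and then a further small homotopy removes it entirely (a ``finger move'' pulling $j$ off that part of $f(N)$). This is exactly the kind of elementary surgery on preimages used for generic maps between equidimensional manifolds; the co-orientation bookkeeping guarantees the signs are opposite so the band is embedded with the correct local model. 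Since $I$ is $1$-dimensional the combinatorics of which arc is ``adjacent'' to which is governed simply by the linear order of preimage points in each fiber $f^{-1}(j(t))$, so one can always find such an adjacent $\pm$ pair as long as $a^-(j)>0$.

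I would organize the induction on $a^-(j)$ (equivalently on $a^+(j)$, since their difference is fixed at $d$): given $a^-(j)\ge 1$, locate an innermost negative arc, find the positive arc immediately ``inside'' or ``outside'' it in the fiber order, cancel the pair by the local homotopy above, observe that this decreases both $a^+$ and $a^-$ by one while leaving the degree equation intact, and repeat. Circle components and interior turn-back components of $J$ carry no net degree and can likewise be removed by small homotopies of $j$ pulling it off the corresponding (immersed disk or band) region, so in the end $J$ consists solely of $d$ positive arcs. The main obstacle I expect is the careful verification of the local model for the cancellation: one must check that the adjacent positive and negative arc really do bound an embedded band in $N$ with the standard fold picture along $f$, so that the homotopy of $j$ is well-defined and does not create new negative arcs elsewhere. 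This is where genericity of $f$ and the transversality of $j$ are used essentially, and where one should invoke the argument of \cite{M1}*{\S2, proof of Observation 2} rather than redo the differential-topology details here.
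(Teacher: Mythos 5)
There is a genuine gap at the heart of your plan: the cancellation move. You assert that whenever $a^-(j)>0$ you can find a negative arc $C^-$ and a positive arc $C^+$ that are ``adjacent in $N$'', cobounding a band on which $j^*f$ is a standard fold, because ``the combinatorics \dots is governed simply by the linear order of preimage points in each fiber $f^{-1}(j(t))$''. But for $n\ge 2$ the fiber $f^{-1}(j(t))$ is a finite subset of an $n$-manifold and carries no linear order, so ``adjacent'' and ``nothing of $J$ between them'' are not defined; and even two nearby preimage points of opposite local degree need not be joined by a fold at all --- between the two sheets $f$ can be arbitrarily complicated, so there is no local model to push $j$ across. (Even for $n=1$, where the fiber is cyclically ordered in $N=S^1$, the arc of $N$ between two adjacent preimage points of opposite sign can wrap around $M$ and contain many folds, so adjacency still does not yield a fold band.) To make the specific components $C^+$ and $C^-$ merge one must in general drag the path across images of fold curves far away from its current position, and during such a drag the pullback changes in uncontrolled ways, possibly creating new negative arcs; so your induction measure $a^-(j)$ is not known to decrease. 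This is exactly the hard content of the lemma, and deferring ``the differential-topology details'' to \cite{M1}*{\S2, proof of Observation 2} is circular, since that reference is the proof of the statement you are asked to establish (the present paper indeed gives no proof other than this citation).

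A secondary, fixable error: the effect you describe for the move is wrong. Pushing $j$ across a fold image near one interior point $t_0$ merges the two sheets only over a neighborhood of $t_0$; it splits $C^+\cup C^-$ into two arcs, one containing both endpoints of $C^\pm$ over $\ell(0)$ and one containing both endpoints over $\ell(1)$ --- not ``a single component that maps to a subinterval of $I$ not containing an endpoint''. These arcs cannot be removed by a further small homotopy, since the fibers $f^{-1}(\ell(0))$ and $f^{-1}(\ell(1))$ are unchanged by a homotopy supported in the interior. As it happens this does not hurt the strategy, because the lemma only forbids negative arcs and such turn-back arcs (and circles) are allowed; likewise your final goal of ``solely $d$ positive arcs'' is stronger than needed and not generally achievable. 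Your bookkeeping $a^+(j)-a^-(j)=\Deg(f)$ is correct. But without a valid reason why a cancelling fold crossing for the given pair $(C^+,C^-)$ can always be arranged without side effects, the proposal does not prove the lemma.
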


Without loss of generality $a:=\ell(0)$ and $b:=\ell(1)$ are $f$-regular values.
(In fact, since any $f$-transverse path $\ell_+$ containing $\ell$ is again such
that $(\ell_+^*f)^{-1}(I)$ contains no negative arcs, $a$ and $b$
could have been any $f$-regular values given in advance.)
Let $L=(\ell^*f)^{-1}(I)$, and let $D$ be a bijection between
$[d]:=\{0,1,\dots,d-1\}$ and the set of endpoints in $(\ell^*f)^{-1}(0)$ of
the $d$ positive arcs in $L$.

Let $j\:(I,\partial I)\to(M,b)$ be any $f$-transverse loop.
Then the product $\hat j$ of the paths $\ell$, $j$ and the inverse path
$\bar\ell$ (defined by $\bar\ell(t)=\ell(1-t)$) is again such that
$\hat J:=(\hat j^*f)^{-1}(I)$
contains no negative arcs; moreover, each positive arc of $L$ is contained in
a unique positive arc of $\hat J$.

We define $\pi(f,\ell)$ to be the set of all $f$-transverse $b$-based loops
up to $b$-based homotopy $j_t$ such that the $a$-based homotopy
$\hat j_t$ is $(a,f)$-coherent.
Clearly this is a group with respect to the usual product of loops
and the usual inverse of a loop.

Furthermore, assigning to an endpoint of a positive arc in $\hat J$
the other endpoint of this arc, we get a bijection $h_{j,D}\:[d]\to [d]$.
If $[j']=[j]\in\pi(f,\ell)$, clearly $h_{j',D}=h_{j,D}$.
Hence $[j]\mapsto h_{j,D}$ defines a homomorphism
$\phi_{f,\ell,D}\:\pi(f,\ell)\to S_d$.

The following theorem says in particular that the {\it transverse fundamental
group} $\pi(f):=\pi(f,\ell)$ is well-defined up to an inner automorphism, and
its {\it monodromy map} $\phi_f:=\phi_{f,\ell,D}\:\pi(f)\to S_{|\Deg f|}$ is
well-defined up to an inner automorphism of the target group.

\begin{theorem}\label{2.3} Let $f\:N\to M$ be a generic smooth map between
closed oriented connected $n$-manifolds.
Then $\pi(f,b):=\pi(f,\ell)$ does not depend on the choice of $\ell$.
Moreover, every path $p$ joining $b'$ and $b$ induces an isomorphism
$H_p\:\pi(f,b)\to\pi(f,b')$ and a permutation $h_{p,D,D'}\in S_d$ such that
$h_{p,D,D'}\phi_{f,\ell,D}=\phi_{f,\ell',D'}H_p$.
\end{theorem}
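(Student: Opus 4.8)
The plan is to reduce the whole statement to a single cut-and-paste lemma describing how the existence of a coherent homotopy for a \emph{conjugated} homotopy depends on the conjugating path, and then to read off all three assertions from it together with some arc-tracing bookkeeping. First I would unwind the pullback over a conjugated homotopy. If $j_t$ is a $b$-based homotopy of $f$-transverse loops and $q$ is an $f$-transverse path with $q(1)=b$, then the pullback of $f$ along $q j_t\bar q\:S^1\x I\to M$ is obtained from the pullback $\Sigma$ of $f$ along $j_t$ by cutting $\Sigma$ along the copy of $f^{-1}(b)\x I$ lying over the basepoint of the loop, and gluing in $(q^*f)^{-1}(I)\x I$ along $f^{-1}(b)\x I$ on one side and $(\bar q^*f)^{-1}(I)\x I$ on the other (the $q$-parts do not vary with the homotopy parameter). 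A component of $(q^*f)^{-1}(I)$ is an arc running between the two ends of $I$ (positive or negative), a neutral arc over $q(0)$, a neutral arc over $b$, or a circle; one checks that circles and neutral arcs over $q(0)$ contribute only disjoint annuli joining the two levels, that an arc running between the two ends together with its mirror in $\bar q$ merely fills one of the cut arcs back in with a disc (changing neither the topology nor the level-pattern of $\Sigma$), and that the only genuinely non-trivial contributions are the neutral arcs over $b$, whose thickenings glue onto $\Sigma$ as one-handles. The key claim is then the \textbf{Conjugation lemma:} for any $f$-transverse $q$ with $q(1)=b$, the set of $f$-transverse $b$-based loops modulo the relation ``$j\sim j'$ iff some $b$-based homotopy $j_t$ from $j$ to $j'$ has $q j_t\bar q$ being $(q(0),f)$-coherent'' does not depend on $q$.

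Granting the Conjugation lemma, the first assertion is immediate: applying it to $q=\ell$ and $q=\ell'$ shows that $\pi(f,\ell)$ and $\pi(f,\ell')$ are the same group as soon as $\ell(1)=\ell'(1)=b$, even when their initial points differ, so $\pi(f,b):=\pi(f,\ell)$ is well defined. For a path $p$ from $b'$ to $b$ I would choose an $f$-transverse representative and set $H_p[j]=[p j\bar p]$; since $\ell'(p j_t\bar p)\bar{\ell'}=(\ell'p)\,j_t\,\overline{\ell'p}$, the Conjugation lemma applied to the path $\ell'p$ (which ends at $b$) shows $H_p$ is well defined, and it is a homomorphism because it is conjugation. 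Its inverse is $H_{\bar p}$ followed by the inner automorphism of $\pi(f,b')$ given by the loop $p\bar p$, so $H_p$ is an isomorphism; it depends only on the class of $p$, and varying $p$ by a loop varies $H_p$ by an inner automorphism, which is the source of the ``well defined up to an inner automorphism'' of the preceding remark.

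It remains to produce $h_{p,D,D'}$ and to check the compatibility with the monodromies; here the no-negative-arc hypotheses on $\ell$ and $\ell'$ re-enter. The composite path $\ell'p\bar\ell$ joining $a$ to $a'$ has exactly $d$ arcs running all the way through with degree $+1$ — they are obtained by tracing the $d$ positive arcs of $\ell$ forward across $p$ into the $d$ positive arcs of $\ell'$ — and via the labellings $D$, $D'$ this gives a bijection $h_{p,D,D'}\:[d]\to[d]$. That $h_{p,D,D'}\phi_{f,\ell,D}=\phi_{f,\ell',D'}H_p$ is then a direct consequence of the definitions: the monodromy of $[j]$ is read off by tracing the positive arcs of $\ell j\bar\ell$, and tracing a positive arc of $\ell j\bar\ell$ is literally the same operation as tracing the corresponding positive arc of $\ell'(p j\bar p)\bar{\ell'}$ with the degree-$+1$ arcs of $\ell'p\bar\ell$ and of its inverse prefixed and suffixed. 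Taking $b'=b$ and $p=\mathrm{const}_b$ in this last step simultaneously yields the change-of-$\ell$ version of the compatibility, which is what makes $\phi_f$ well defined up to an inner automorphism of $S_{|\Deg f|}$.

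The hard part is the Conjugation lemma, and inside it precisely the one-handles coming from the neutral arcs over the terminal value $b$: one must show that the surgery they effect on $\Sigma$ — cutting along two of the arcs $f^{-1}(b)\x I$ and regluing crosswise — carries the property ``every component meeting the basepoint fibre is an annulus joining the two levels'' to itself, and, the delicate point, that a \emph{coherent} homotopy for one choice of $q$ can actually be upgraded to a coherent homotopy for another by an explicit isotopy or handle-slide of the certifying surface, not merely shown to exist by an Euler-characteristic count. One cannot simply normalise these neutral arcs away, since $f$ need not have any regular value with only $|\Deg f|$ preimages; so this is the technical heart of \S\ref{2}, presumably handled by the surgery methods underlying Lemma \ref{2.2} and \cite{M1}*{\S2}.
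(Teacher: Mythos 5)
Your outer bookkeeping (defining $H_p$ by conjugation with a path, defining $h_{p,D,D'}$ by tracing the $d$ positive through-arcs of the composite $\ell'p\bar\ell$, and deducing the intertwining with the monodromies by concatenating tracings) matches what the paper does, and the counting that makes $h_{p,D,D'}$ well defined is the same counting that the paper leaves implicit for $h_{j,D}$. But the entire substance of Theorem \ref{2.3} sits in your ``Conjugation lemma,'' and there you have a genuine gap, in two respects. First, you do not prove it: you explicitly defer ``the technical heart'' to unspecified surgery methods behind Lemma \ref{2.2} and \cite{M1}. Since the theorem is nothing but this independence statement, the proposal reduces the theorem to an unproven claim rather than proving it. Second, the lemma as you state it is stronger than the theorem and is dubious in that generality: you allow an arbitrary $f$-transverse conjugating path $q$ with $q(1)=b$, dropping the no-negative-arc hypothesis, and you even remark that this hypothesis ``re-enters'' only for the monodromy. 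That is not how the structure works. A negative arc of $(q^*f)^{-1}(I)$ attaches an extra strand joining the basepoint fibre over $q(0)$ to a point of $f^{-1}(b)$ which an admissible $\ell$ would instead pair off by a neutral arc over $b$ (one of your ``handles''); consequently the set of components of the pullback surface that the coherence condition constrains genuinely changes with $q$, and nothing in your reduction shows the resulting equivalence relations coincide. Even among admissible paths the issue is real: different $\ell$'s may select different $d$-element subsets of $f^{-1}(b)$ as endpoints of their positive arcs and different pairings of the remaining points by neutral arcs over $b$, and reconciling these choices is exactly what has to be argued.

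For comparison, the paper does not attempt any such general statement: it defines $H_p$ by conjugating with the specific composite built from $\ell'$, $p$, $\bar\ell$ and $\ell$, so that the conjugator used in the coherence test terminates with $\bar\ell\,\ell$ and the strand/handle pattern at the $b$-end is governed by the positive-arc structure of $\ell$ itself; the comparison between $(a,f)$-coherence of $\ell j_t\bar\ell$ and $(a',f)$-coherence of the conjugated homotopy is then made with that particular path, not with an arbitrary one. If you want to keep your plan, you must either restrict your lemma to conjugators of this special form and actually carry out the surgery/handle-slide argument you allude to (showing a coherent certifying surface for one conjugator can be converted into one for the other), or else prove the arbitrary-$q$ version, which would require an additional argument handling the extra strands created by negative arcs --- and which may simply be false.
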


The original motivation of this result was to distil some algebraic topology
from the geometric part of the proof of the main theorem of \cite{M1}.

\begin{proof} The bijection $h_{p,D,D'}$ is defined similarly to $h_{j,D}$.
The isomorphism $H_p$ is defined by assigning to any $b$-based loop $j$
the $b'$-based loop $j'$ defined as the product of the paths $\ell_p$, $j$
and $\bar\ell_p$, where $\ell_p$ is in turn the product of $\ell'$, $p$,
$\bar\ell$ and $\ell$.
\end{proof}

\begin{proposition}\label{2.4} The image of the monodromy map is transitive.
In particular, the order of $\pi(f)$ is divisible by $\Deg f$.
\end{proposition}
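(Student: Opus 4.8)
The plan is to show transitivity of the monodromy image by exhibiting, for any two labels $i,j\in[d]$, an $f$-transverse $b$-based loop $j_0$ whose associated permutation $h_{j_0,D}$ sends $i$ to $j$. The geometric idea is to build such a loop out of a path in $N$ connecting the point $D(i)$ to the point $D(j)$ in the fibre, pushed down by $f$ and corrected so as to become a based loop in $M$; tracing the positive arcs of the resulting pullback then produces the desired transposition-type permutation.

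First I would set up the picture at the basepoint. Since $|f^{-1}(b)|=|\Deg f|=d$ (after passing to an unfolded basepoint via Lemma~\ref{2.2}, i.e.\ working with $a=\ell(0)$ and identifying the endpoints of positive arcs of $L$ with $[d]$ through $D$), the fibre over $a$ consists of exactly $d$ points $D(0),\dots,D(d-1)$. Because $N$ is connected, there is a smooth path $q\:I\to N$ from $D(i)$ to $D(j)$; after a small generic perturbation rel endpoints I may assume $f\circ q$ is an $f$-transverse loop based at $a=f(D(i))=f(D(j))=b$ (identifying $a$ with $b$; for the general case one conjugates by $\ell$ exactly as in the definition of $\pi(f,\ell)$). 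The key point is that $q$ itself is, essentially by construction, one component of the pullback $((f\circ q)^*f)^{-1}(I)$: the diagonal pair $(t\mapsto(q(t),f q(t)))$ lies in $P$, it is a positive arc because $q$ runs from one fibre point to another and $f\circ q$ has a well-defined degree $\pm1$ on this component, and one endpoint is $D(i)$ while the other is $D(j)$ (possibly after reversing orientation of $I$, which does not change the sign of the arc). Hence the permutation $h_{f\circ q,D}$ swaps $i$ and $j$, so $i$ and $j$ lie in the same orbit; since $i,j$ were arbitrary, the image of $\phi_f$ is transitive.

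The main obstacle, and where care is needed, is ensuring that after the generic perturbation the component of the pullback through $D(i),D(j)$ is genuinely a \emph{single positive arc} with endpoints exactly at $D(i)$ and $D(j)$, rather than being broken up, absorbed into a larger arc, or acquiring extra fibre endpoints over $0$ or $1$. This is handled by choosing the perturbation supported away from the fibres over the endpoints (so the endpoints of $P$ over $\partial I$ are unchanged and are precisely $f^{-1}(b)\times\{0,1\}$ arranged as the positive arcs of $L$), and by the transversality clause in the definition of the pullback which forces $P$ to be a $1$-manifold whose component through $(q(0),b)$ is an arc; its degree over $I$ is $\pm1$ by the local orientation computation, so it is a positive arc in the sense of the ``general case'' definition. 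The second sentence of the proposition is then immediate: a transitive subgroup of $S_d$ has order divisible by $d$ by the orbit--stabiliser theorem, so $|\pi(f)|$, which surjects onto this subgroup, is divisible by $d=|\Deg f|$.
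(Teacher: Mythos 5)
There is a genuine gap at the central step of your construction: you cannot, in general, make $f\circ q$ an $f$-transverse loop by a small perturbation rel endpoints, and consequently the ``tautological'' arc $t\mapsto(q(t),t)$ need not survive as a component of the pullback. Since $d(f\circ q)_t=df_{q(t)}\circ dq_t$, the image of $d(f\circ q)_t$ is contained in the image of $df_{q(t)}$; hence $f\x(f\circ q)$ can be transverse to $\Delta_M$ at the tautological point $(q(t),t)$ only if $q(t)$ is a regular point of $f$. So no path $q'$ rel the endpoints $D(i),D(j)$ that meets the singular set $\Sigma_f$ has $f$-transverse image, however it is perturbed inside $N$; and meeting $\Sigma_f$ is in general unavoidable, because $D(i)$ and $D(j)$ may lie in different components of $N\but\Sigma_f$ (in the degree-$d$ maps of \S\ref{3}, the $d$ points of $f^{-1}(b)$ sit in $d$ different disks separated by fold spheres). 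If instead you perturb the loop $f\circ q$ inside $M$ to make it $f$-transverse (which is always possible), the tautological section is destroyed precisely at the folds: where $q$ crosses a fold, $f\circ q$ is tangent to $f(\Sigma_f)$ (locally $t\mapsto t^2$), and after the perturbation the lifted component through $D(i)$ makes a U-turn at the fold value instead of continuing towards $D(j)$. So while the component of the pullback through $(D(i),0)$ is indeed an arc (and, after conjugation by $\ell$, not a negative one), nothing forces its other endpoint to be $D(j)$. Your argument is the classical covering-space argument and proves the proposition only when $f$ has no critical points along some path from $D(i)$ to $D(j)$, i.e.\ essentially in the covering case, where $\pi(f)$ is already understood.

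This difficulty is exactly what the paper's Lemma \ref{2.5} (imported from \cite{M1}, proof of Lemma 2 there) is designed to overcome: it asserts that any $f$-transverse path joining two regular values extends to an $f$-transverse \emph{loop} whose pullback contains the two prescribed preimage points in a single connected component, and Proposition \ref{2.4} is deduced from that statement. The substance of that lemma is the routing of the loop across the fold images (possibly with many crossings) so that the lifted arc, after its turns at the folds, really does travel from the one preimage point to the other; that geometric input is what is missing from your proof, and it cannot be replaced by a transversality-and-genericity remark.
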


This follows easily from

\begin{lemma}\label{2.5} \cite{M1}*{\S2, proof of Lemma 2}
Let $f\:N\to M$ be a generic smooth map between closed oriented connected
$n$-manifolds, $n\ge 1$.
If $x,y\in N$ are such that $f(x)$ and $f(y)$ are $f$-regular values, any
$f$-transverse path joining $f(x)$ and $f(y)$ extends (with respect to
a fixed inclusion $I\emb S^1$) to an $f$-transverse loop $l\:S^1\to M$
such that $(f^*l)^{-1}(x)$ and $(f^*l)^{-1}(y)$ are singletons and
lie in the same connected component of $(l^*f)^{-1}(S^1)$.
\end{lemma}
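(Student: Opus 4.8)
The plan is to build the extension $l\:S^1\to M$ in two stages: first produce the abstract $1$-manifold mapping into $M$, then verify the two singleton/connectedness conditions by a general-position argument. Fix an $f$-transverse path $j\:I\to M$ with $j(0)=f(x)$, $j(1)=f(y)$, and write $J=(j^*f)^{-1}(I)$ for its pullback, an oriented compact $1$-manifold whose boundary lies over $\partial I$. Since $f(x)$ is an $f$-regular value, exactly one boundary point of $J$ lies over $0$ and maps (under $f^*j$) to $x$; call the component of $J$ containing it $C_x$, and similarly define $C_y$. Because $J$ is a compact $1$-manifold, each of $C_x,C_y$ is either an arc or — if it were a circle it could not meet $\partial J$, so — it is an arc. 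The two possibilities are $C_x=C_y$ (a single arc running from $x$ to $y$) or $C_x\ne C_y$.

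First I would handle the generic situation where $C_x\ne C_y$ and $C_x$ is a positive arc (the sign conventions of the preceding definitions give $C_x,C_y$ well-defined signs since they have one endpoint over each point of $\partial I$, unless an endpoint of $j$ is a double value — excluded because $f(x),f(y)$ are regular). Choose a small arc $\alpha$ in $M$ attached to $j$ at the two ends, so that $j\cup\alpha$ is an embedded loop representing the prescribed inclusion $I\emb S^1$; extend $f$-transversality of $j$ across $\alpha$ by a general-position perturbation of $\alpha$ rel endpoints, keeping it disjoint from the critical values of $f$ that it does not already have to cross. Over $\alpha$ the pullback is a disjoint union of arcs and circles joining the boundary points of $J$ in pairs. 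The key point is that I get to choose how $\alpha$ reconnects these boundary points: I want the reconnection to glue $C_x$ to $C_y$ into one circle while avoiding creating any extra intersections of $l$ with $x$ or $y$. Concretely, push $\alpha$ so that over it the only pullback arc with an endpoint equal to $x$ or $y$ is the one realizing the splicing $C_x\leftrightarrow C_y$; since $f$ is a local diffeomorphism near $x$ and near $y$, a small choice of $\alpha$ near $f(x)=j(0)$ and $f(y)=j(1)$ controls exactly which sheets are picked up, and transversality is an open dense condition on such choices. Then $(l^*f)^{-1}(x)$ and $(l^*f)^{-1}(y)$ are the single original boundary points, and they now lie on the common circle $C_x\cup(\text{arc over }\alpha)\cup C_y$, which is the required component of $(l^*f)^{-1}(S^1)$.

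The remaining cases are reductions to the above. If $C_x=C_y$ already, then $j$ itself joins $x$ to $y$ through a single arc, and I only need to close it up by an $\alpha$ whose pullback near the ends contributes no new preimage of $x$ or $y$ — the same local-sheet control does this, and the component condition is automatic. If $C_x$ (or $C_y$) is a \emph{negative} arc, I first apply Lemma \ref{2.2}: replace $j$ by the concatenation $\bar\ell\cdot j\cdot\ell$ (or work relative to the canonical $\ell$ of that lemma), which introduces no negative arcs and inside which each of $x,y$ still has exactly one preimage on a positive arc; then proceed as before. I expect the main obstacle to be the bookkeeping in the general-position step: one must simultaneously (i) keep $\alpha$ $f$-transverse, (ii) control \emph{which} pairs of boundary points of $J$ get spliced, and (iii) ensure no spurious preimage of $x$ or $y$ is created over $\alpha$. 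All three are codimension reasons — the set of preimages of the two points $x,y$ is $0$-dimensional in $N$, so a generic small $\alpha$ avoids it except at the prescribed endpoints — but making the splicing pattern the one we want (rather than merely \emph{some} pattern) is the part that needs the local-diffeomorphism structure of $f$ near $x$ and $y$ rather than just dimension counting. This is exactly the content attributed to \cite{M1}*{\S2, proof of Lemma 2}, and I would organize the write-up around that splicing lemma.
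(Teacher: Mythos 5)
There is a genuine gap at the crux. The paper does not prove Lemma \ref{2.5} itself but cites \cite{M1}; the one hint it gives (in Example \ref{3.2.2}) is that the loop is constructed inside a neighborhood of $f(p(I))$ for a path $p$ \emph{in the source $N$} joining the two preimage points. That is exactly the idea your proposal is missing. Your closing arc $\alpha$ runs from $j(1)=f(y)$ to $j(0)=f(x)$ in $M$, and over it every sheet of $f$ is picked up near each endpoint; which boundary points of $J$ get spliced to which is determined by how $\alpha$ interacts with the critical values of $f$ along its entire length, i.e.\ it is a global monodromy-type datum. A ``small choice of $\alpha$ near $f(x)$ and $f(y)$'' together with the fact that $f$ is a local diffeomorphism at $x$ and $y$ controls nothing beyond the germ of the pullback at the endpoints, so the step ``push $\alpha$ so that the reconnection glues $C_x$ to $C_y$'' is precisely the statement to be proved, and you in effect defer it back to \cite{M1}. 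The way to actually get it is to use connectedness of $N$: choose a generic path $q$ in $N$ from $y$ to $x$ and take $\alpha$ to be an $f$-transverse perturbation of $f\circ q$ (missing $f(x),f(y)$ in its interior); the tautological lift $t\mapsto(q(t),t)$ then shadows a path in $(l^*f)^{-1}(S^1)$ joining the point over $y$ to the point over $x$. The only delicate points are where $q$ crosses $\Sigma_f$, so that $f\circ q$ is tangent to $f(\Sigma_f)$ and not $f$-transverse there; after perturbing to the image side of the fold, the two sheets that $q$ passes between are joined by a turning arc of the pullback, so connectivity survives. None of this follows from general position alone, which is why your ``codimension reasons'' only dispose of the easy requirements (transversality of $\alpha$ and avoidance of $f(x),f(y)$), not of the splicing.

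A secondary problem: the detour through arc signs and Lemma \ref{2.2} is both irrelevant and illegitimate here. The sign of $C_x$ or $C_y$ plays no role in the statement, and replacing the given path $j$ by $\bar\ell\cdot j\cdot\ell$ proves a different assertion, since the lemma requires the loop to extend the \emph{given} $f$-transverse path with respect to the fixed inclusion $I\emb S^1$. The correct case analysis is only the trivial one you already note ($C_x=C_y$ versus $C_x\ne C_y$), and in both cases the substance is the construction of $\alpha$ from a path in $N$ as above.
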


\subsection{2-prems and string links}

\begin{proposition}\label{2.6} If $f$ is a $2$-prem, then the monodromy map
$\phi_f$ factors through the projection $T_{|\Deg(f)|}\to S_{|\Deg(f)|}$,
where $T_k$ is the group of concordance classes of string links of
multiplicity $k$.
\end{proposition}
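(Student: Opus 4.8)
The plan is to produce, from a $2$-prem structure on $f$, a string-link-valued refinement of the monodromy, directly out of the geometry of the lift. Fix the $f$-transverse path $\ell$ from Lemma \ref{2.2}, with $L=(\ell^*f)^{-1}(I)$ having exactly $d$ positive arcs and no negative arcs, and fix the bijection $D\:[d]\to(\ell^*f)^{-1}(0)$. Let $g\:N\to\R^2$ be such that $\bar f=f\x g\:N\emb M\x\R^2$ is an embedding. For a generic $f$-transverse $b$-based loop $j$, form $\hat\jmath=\ell\cdot j\cdot\bar\ell$ as in the text; then $\hat J=(\hat\jmath^*f)^{-1}(I)$ is a $1$-manifold whose $d$ positive arcs join the $d$ points over $0$ to themselves by the permutation $h_{j,D}$. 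I would map each such arc $C$ into $I\x\R^2$ by sending $s\in C$ to $\bigl(\hat\jmath^*f(s),\,g(s)\bigr)$: because $\bar f$ is an embedding, distinct arcs over the same point of $I$ have distinct $g$-coordinates, so this is an embedding of the $d$ positive arcs (together they form a submanifold diffeomorphic to $d$ intervals) into $I\x\R^2$, transverse to the ends, i.e.\ a string link diagram on $d$ strands up to a choice of identification of the $d$ punctures of $\{0\}\x\R^2$ and of $\{1\}\x\R^2$ with $[d]$ via $D$ at both ends. Call its concordance class $\sigma_{f,g}(j)\in T_d$. By construction its underlying permutation is $h_{j,D}=\phi_{f,\ell,D}([j])$, so once $\sigma_{f,g}$ is shown to be a well-defined homomorphism $\pi(f,\ell)\to T_d$, it witnesses the required factorization of $\phi_f$ through $T_d\to S_d$.

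The core verifications are well-definedness and multiplicativity. Multiplicativity is the easy half: given $f$-transverse loops $j_1,j_2$, the arc picture of $j_1\cdot j_2$ is literally the stacking of the arc pictures of $\hat\jmath_1$ and $\hat\jmath_2$ along the middle copies of $\{b\}\times\R^2$, because in $\hat\jmath_1\cdot\hat\jmath_2$ the central $\bar\ell\cdot\ell$ contributes only a collar that can be absorbed; this is exactly the stacking product on string links, so $\sigma_{f,g}(j_1 j_2)=\sigma_{f,g}(j_1)\cdot\sigma_{f,g}(j_2)$, and the constant loop gives the trivial string link. For well-definedness I would take $[j_0]=[j_1]\in\pi(f,\ell)$, i.e.\ a $b$-based homotopy $j_t$ whose lift $\hat\jmath_t$ is $(a,f)$-coherent. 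Making $h$ generic and keeping the same $g$, I would map the positive-arc part of the pullback $(\hat h^*f)^{-1}(I\times I)$ into $I\times I\times\R^2$ by $s\mapsto(\hat h^*f(s),\,s\text{-time},\,g(\cdot))$; this is a proper embedded cobordism in $I\times\R^2$ rel ends between the two string link diagrams. The coherence hypothesis is exactly what guarantees that each component touching the strand region is an annulus with one boundary on each level and none becomes closed off --- so the cobordism restricted to the strand part is a disjoint union of product annuli $\text{interval}\times I$, i.e.\ precisely a string link \emph{concordance}, not merely a cobordism. Hence $\sigma_{f,g}(j_0)=\sigma_{f,g}(j_1)$ in $T_d$, and likewise changing $g$ through an admissible homotopy of lifts gives a concordance.

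The main obstacle I anticipate is bookkeeping the basepoint/identification data so that the string link really is well defined as an element of $T_d$ and not just of some set of diagrams without fixed endpoints. Two issues arise: first, one must use the \emph{same} indexing $D$ at $t=0$ and $t=1$ of $I\times I$, and the coherence of $\hat h$ on the $(a,f)$-fiber over $pt\times I$ is what lets the $D$-labels be carried consistently across the homotopy; second, the loop $j$ is only $b$-based while the arcs are counted over $a$, so the bracketing by $\ell$ and $\bar\ell$ must be threaded through carefully --- but since $\ell$ has no negative arcs, the $\ell$-collars at both ends of $\hat J$ are honestly product $[d]\times(\text{interval})$ pieces contributing the identity braid, so they can be stripped off without changing the class. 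A secondary technical point is transversality: one must choose $g$, $h$, and $\hat h$ generically so that the strand part of each pullback is a manifold with the expected boundary and so that the cobordism meets the ends correctly; this is standard general position and I would invoke it rather than belabor it. Finally, independence of the choice of $\ell$ (hence of base structure) follows from Theorem \ref{2.3}: the isomorphism $H_p$ and permutation $h_{p,D,D'}$ there conjugate both $\phi_{f,\ell,D}$ and, by the same path-bracketing, $\sigma_{f,g}$, so the factorization statement is independent of all choices.
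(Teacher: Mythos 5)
Your proposal is correct and follows essentially the same route as the paper's (very terse) proof: since $f$ factors through the embedding $f\times g\:N\emb M\times\R^2$, the pullback of an $f$-transverse loop factors through an embedding of its strand part into $I\times\R^2$, giving a string link whose underlying permutation is $\phi_f([j])$, and a coherent homotopy pulls back to a properly embedded surface whose annular components give a concordance. Your additional bookkeeping (restricting to the positive arcs, matching the endpoint sets over $\ell(0)$ at both ends via $D$, multiplicativity by stacking) just makes explicit what the paper leaves implicit; note only that the concordance annuli need not be product annuli, but concordance does not require this.
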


By a {\it string link} of multiplicity $k$ we mean an embedding
$g\:(\Z/k)\x I\emb\C\x I$ sending $(\Z/k)\x\{j\}\subset\C\x\{j\}$ to itself for
$j=0,1$.
If moreover $g$ sends each $((e^{2\pi i/k})^n,j)$ to itself for $j=0,1$,
then $g$ is a {\it pure string link}.
Thus the group $C_k$ of concordance classes of pure string links of multiplicity
$k$ is the kernel of the projection $T_k\to S_k$.

\begin{proof} Let $p\:M\x\R^2\to M$ be the projection, and let us consider
an $f$-transverse loop $j\:(I,\partial I)\to (M,b)$.
Since $f$ factors through an embedding of $N$ into $M\x\R^2$, its pullback
$j^*f$ factors through an embedding of $(j^*f)^{-1}(I)$ into the pullback
$(j^*p)^{-1}(M\x\R^2)=I\x\R^2$.
This embedding is a string link.
Similarly a coherent homotopy gives rise to a concordance.
\end{proof}

Proposition \ref{2.6} is analogous to the ``only if'' implication in

\begin{theorem}[Hansen \cite{H1}, \cite{H2}]\label{2.7} 
A $d$-fold covering $f\:N\to M$ is a $2$-prem if and only if the monodromy
$\pi_1(M)\to S_d$ factors through the projection $B_d\to S_d$, where $B_d$
denotes the braid group on $d$ strands.
\end{theorem}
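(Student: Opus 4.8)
The plan is to realize $B_d$ as the fundamental group of the space $\operatorname{UConf}_d(\R^2)$ of $d$-element subsets of the plane, and to use the ``tautological'' $d$-fold covering $\mathcal E\to\operatorname{UConf}_d(\R^2)$, where $\mathcal E:=\{(C,x)\mid x\in C\subset\R^2\}$: its monodromy is the standard surjection $B_d\to S_d$, and it is a $2$-prem in a tautological way, since the inclusion $\mathcal E\emb\operatorname{UConf}_d(\R^2)\x\R^2$, $(C,x)\mapsto(C,x)$, is a smooth embedding over $\operatorname{UConf}_d(\R^2)$. Moreover, the property of a covering being a $2$-prem is preserved under pullback: if a covering $p$ over $X$ comes with a fibrewise embedding into $X\x\R^2$, its pullback along any smooth $Y\to X$ inherits one. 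Thus $\operatorname{UConf}_d(\R^2)$, together with $\mathcal E$, is the universal $d$-fold covering that is a $2$-prem. The second ingredient is that $\operatorname{UConf}_d(\R^2)$ is aspherical, so that for a connected manifold $M$ the homotopy classes of maps $M\to\operatorname{UConf}_d(\R^2)$ correspond to the conjugacy classes of homomorphisms $\pi_1(M)\to B_d$.

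For the ``only if'' part, suppose $f\x g\:N\emb M\x\R^2$ is a smooth embedding. Define $v\:M\to\operatorname{UConf}_d(\R^2)$ by $v(m)=g(f^{-1}(m))$; this is well defined and smooth, since $f^{-1}(m)$ consists of $d$ points (as $f$ is a $d$-fold covering) on which $g$ is injective (because $f\x g$ is injective and $f$ is constant on $f^{-1}(m)$), while locally $v$ is the unordered tuple of the compositions of $g$ with the $d$ local sections of $f$. Then $n\mapsto(f(n),g(n))$ is a diffeomorphism of $N$ onto $v^*\mathcal E=\{(m,x)\in M\x\R^2\mid x\in v(m)\}$ commuting with the projections to $M$, so the monodromy of $f$ equals that of $v^*\mathcal E$, namely the composition $\pi_1(M)\xr{v_*}B_d\to S_d$; in particular it lifts through $B_d\to S_d$.

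For the ``if'' part, suppose the monodromy $\pi_1(M)\to S_d$ factors as $\pi_1(M)\xr{\tilde\rho}B_d\to S_d$. By asphericity of $\operatorname{UConf}_d(\R^2)$ there is a map $u\:M\to\operatorname{UConf}_d(\R^2)$ with $u_*=\tilde\rho$, and it may be taken smooth after a small homotopy. The pulled-back covering $u^*\mathcal E\to M$ then has monodromy $(B_d\to S_d)\circ\tilde\rho$, i.e.\ the monodromy of $f$, so it is isomorphic to $f$ over $M$; and it is a $2$-prem by the pullback remark above. Since being a $2$-prem depends only on the isomorphism class of the covering, $f$ is a $2$-prem.

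The crux --- and essentially the only non-formal point, once asphericity of configuration spaces is invoked --- is the first paragraph: recognizing $\operatorname{UConf}_d(\R^2)$ with $\mathcal E$ as the universal $d$-fold covering that is a $2$-prem, and verifying that pullback preserves this property in the smooth category (which for coverings presents no difficulty, pullbacks of coverings along smooth maps being again smooth coverings carrying pulled-back embeddings). Alternatively one can avoid configuration spaces and work over a CW structure on $M$ with one vertex: over the $1$-skeleton, realize each generating loop by a geometric braid in $I\x\R^2$ whose underlying permutation is the monodromy of that loop and whose class in $B_d$ is $\tilde\rho$ of that loop, obtaining an embedded covering over the $1$-skeleton; then extend the embedding over each cell of dimension $k\ge 2$, where the covering is trivial and the obstruction to extending lies in $\pi_{k-1}(\operatorname{OConf}_d(\R^2))$ --- for $k=2$ this group is the pure braid group and the obstruction is the (trivial) image under $\tilde\rho$ of the corresponding defining relator of $\pi_1(M)$, and for $k\ge 3$ it vanishes since $\operatorname{OConf}_d(\R^2)$ is aspherical.
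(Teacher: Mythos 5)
Your proof is correct, and it is a genuinely different route from the one sketched in the paper. What you give is essentially Hansen's own classifying-space argument: $\operatorname{UConf}_d(\R^2)$ is a $K(B_d,1)$, the tautological covering $\mathcal E\to\operatorname{UConf}_d(\R^2)$ is fibrewise embedded in $\operatorname{UConf}_d(\R^2)\times\R^2$ (hence a $2$-prem) and has monodromy the standard projection $B_d\to S_d$, and both implications become naturality statements about pulling back along the classifying map (for ``only if'', your map $v(m)=g(f^{-1}(m))$; for ``if'', a map realizing the lift $\tilde\rho$, which exists by asphericity). The paper, by contrast, attributes the theorem to Hansen and only writes out the ``if'' direction, by hand and only for a closed orientable surface $M$: remove a disk, realize the braids $\phi(x_1),\phi(y_1),\dots,\phi(x_g),\phi(y_g)$ over a wedge of circles to get a lift over $M$ minus the disk, and extend over the disk because the relator $[x_1,y_1]\cdots[x_g,y_g]$ goes to the trivial braid; the ``only if'' direction is regarded as the covering analogue of Proposition 2.6 (restricting a fibrewise embedding over loops yields braids, and homotopies yield isotopies). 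Your approach buys uniformity and generality --- both directions at once, for a base manifold of any dimension, with no handle or cell decomposition --- at the price of invoking asphericity of configuration spaces and the classification of $d$-fold coverings by (conjugacy classes of) monodromy homomorphisms, together with the minor smoothness bookkeeping you correctly supply; the paper's sketch is more elementary and self-contained in the surface case it actually needs. Your closing obstruction-theoretic alternative (realize braids over the $1$-skeleton, kill the obstruction over $2$-cells via the relators, use asphericity of the ordered configuration space for higher cells) essentially reproduces and generalizes the paper's sketch, so the two texts meet there.
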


The ``if'' implication can be proved as follows (compare \cite{DH}*{statement of Theorem 2}).
Let $D$ be an open disk in $M$; then $\pi_1(M\but D)$ is the free group $\left<x_1,y_1,\dots,x_g,y_g\right>$,
where $g$ is the genus of $M$, and the inclusion $M\but D\emb M$ induces a homomorphism $\pi_1(M\but D)\to\pi_1(M)$
whose kernel is the normal closure of $[x_1,y_1]\dots[x_g,y_g]$.
Let $\phi\:\pi_1(M)\to B_d$ be the given homomorphism, and let $r\:M\but D\to W$ be a deformation retraction
onto a wedge of $g$ copies of $S^1$.
Then the braids $\phi(x_1),\phi(y_1),\dots,\phi(x_g),\phi(y_g)$ combine to yield the desired lift 
$\bar f_0\:f^{-1}(W)\to W\x\R^2$ of the restriction of $f$ over $W$, and its pullback $r^*(\bar f_0)$ is 
the desired lift of the restriction of $f$ over $M\but D$.
Now over $\partial D$ the latter partial lift restricts to the braid $\phi([x_1,y_1]\dots[x_g,y_g])$, which is
trivial, since $\phi$ is a homomorphism; hence the lift extends over $D$.

The ``only if'' part of \ref{2.7} along with Petersen's results discussed in \S\ref{1}
have the following group-theoretic consequence: every homomorphism $G\to S_d$,
where $G$ is a finitely generated free abelian group, factors through $B_d$.
In particular, since $B_d\to S_d$ factors through $T_d$, we get

\begin{corollary}\label{2.8} Let $f$ be a generic smooth map between compact
connected oriented $n$-manifolds, $n\ge 2$.
If the monodromy $\phi_f$ factors through a free product of finitely generated
free abelian groups then it also factors through $T_{|\Deg(f)|}$.
\end{corollary}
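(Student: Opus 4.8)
Set $d=|\Deg(f)|$. The plan is to deduce Corollary~\ref{2.8} formally from the group-theoretic statement recorded just above it --- that every homomorphism from a finitely generated free abelian group to $S_d$ factors through the projection $B_d\to S_d$ --- by means of the universal property of free products. First I would note that this statement is available for \emph{every} $d$: a $d$-sheeted covering of a torus $S^1\times\dots\times S^1$ is a disjoint union of connected coverings of the torus, each of them regular since subgroups of $\Z^k$ are normal, and hence, by the Smith normal form, a product of coverings over $S^1$ after a change of coordinates; by the chain of reductions recalled in \S\ref{1} such a covering is a $2$-prem, so by the ``only if'' half of Theorem~\ref{2.7} its monodromy $\Z^k\to S_d$ --- which may be prescribed arbitrarily --- factors through $B_d$.

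With that in place, suppose $\phi_f$ factors as $\pi(f)\xr{\psi}F\xr{\rho}S_d$, where $F=\ast_{i}A_i$ is a free product of finitely generated free abelian groups. For each $i$ the restriction $\rho|_{A_i}\:A_i\to S_d$ lifts, by the statement just recalled, to a homomorphism $\tilde\rho_i\:A_i\to B_d$ whose composition with $B_d\to S_d$ equals $\rho|_{A_i}$. The universal property of the free product turns the family $(\tilde\rho_i)_i$ into a homomorphism $\tilde\rho\:F\to B_d$; since $(B_d\to S_d)\circ\tilde\rho$ agrees with $\rho$ on every free factor $A_i$ and the subgroups $A_i$ generate $F$, it equals $\rho$. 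As $B_d\to S_d$ factors through $T_d$, the composite $\pi(f)\xr{\psi}F\xr{\tilde\rho}B_d\to T_d$ is then a lift of $\phi_f$ along $T_d\to S_d$, i.e.\ exactly the asserted factorization.

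I do not expect a genuine obstacle, which is why Corollary~\ref{2.8} is phrased as an immediate consequence (``in particular $\dots$ we get''): the only non-formal input is Petersen's treatment of coverings over tori fed through the ``only if'' direction of Hansen's theorem, and everything else is the universal property of free products. The one mild subtlety is that the lifts $\tilde\rho_i$ --- and hence $\tilde\rho$ and the resulting factorization --- are in no way canonical, different choices giving genuinely different factorizations through $T_d$; but this is harmless, since only existence is claimed.
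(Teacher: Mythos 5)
Your argument is exactly the one the paper intends: the group-theoretic statement that any homomorphism from a finitely generated free abelian group to $S_d$ lifts to $B_d$ (via Petersen's torus reductions fed through the ``only if'' half of Theorem~\ref{2.7}), combined factor-by-factor via the universal property of the free product, and then composed with $B_d\to T_d\to S_d$; the paper simply leaves this spelling-out implicit in the phrase ``In particular, since $B_d\to S_d$ factors through $T_d$, we get.'' The only detail you gloss over --- that a possibly disconnected $d$-fold covering of the torus is still a $2$-prem (separate the finitely many compact pieces by translations in the $\R^2$ factor) or, equivalently, that Hansen's theorem applies to non-transitive monodromies --- is harmless and does not affect correctness.
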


Another immediate thing to note about \ref{2.6} and \ref{2.7} is that every $f\:N\to M$
factors into the composition of the embedding $\Gamma_f\:N\emb N\x M$ and
the projection $N\x M\to M$.
Hence

\begin{proposition}\label{2.9}
(a) The monodromy $\pi_1(M)\to S_d$ of every $d$-fold covering $N\xr{f}M$
between surfaces factors through the group $B_d(N)$ of braids in $N\x I$.

(b) The monodromy $\pi(f)\to S_{|\Deg(f)|}$ of every generic map $N\xr{f}M$
between orientable surfaces factors through the group $T_{|\Deg(f)|}(N)$ of
concordance classes of string links in $N\x I$.
\end{proposition}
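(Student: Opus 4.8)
The plan is to prove both parts by the single device that underlies Proposition~\ref{2.6} and the ``only if'' direction of Theorem~\ref{2.7}, with the Euclidean fibre $\R^2$ replaced throughout by $N$ itself. The key point, observed just above the statement, is that $f=q\circ\Gamma_f$, where $\Gamma_f\:N\emb N\x M$, $x\mapsto(x,f(x))$, is the graph embedding and $q\:N\x M\to M$ is the projection; thus \emph{every} $f$ lifts to an embedding through the trivial $N$-bundle $q$ over $M$, exactly as a $2$-prem lifts to an embedding into $M\x\R^2$ through the trivial $\R^2$-bundle --- only now there is no hypothesis to impose. So I would run the constructions of the maps ``$\pi_1(M)\to B_d$'' and ``$\pi(f)\to T_{|\Deg f|}$'' implicit in \ref{2.7} and \ref{2.6} essentially verbatim, pulling back $\Gamma_f$ in place of a prem lift and recording the result in $B_d(N)$, resp.\ $T_{|\Deg f|}(N)$.

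For (a): given $[\gamma]\in\pi_1(M,b)$, lift $\gamma$ to the $d$ paths in $N$ issuing from $f^{-1}(b)=\{x_1,\dots,x_d\}$. By unique path lifting these paths are disjoint at every instant, so their graphs are $d$ pairwise disjoint $I$-monotone embedded arcs in $N\x I$ from $\{x_1,\dots,x_d\}\x\{0\}$ to $\{x_1,\dots,x_d\}\x\{1\}$, i.e.\ an element of $B_d(N)$; equivalently, this is $\Gamma_f$ pulled back over $\gamma$. By the covering homotopy property it depends only on $[\gamma]$ (a based homotopy of loops lifts, again by unique lifting, to an isotopy of the braids), concatenation of loops corresponds to stacking of braids, and the induced permutation of $\{x_1,\dots,x_d\}$ is the monodromy; hence $\pi_1(M)\to S_d$ factors through $B_d(N)$.

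For (b): I may take $d:=\Deg f\ge 0$ (the case $d=0$ being vacuous) and fix $\ell$, $L$, $D$ as in the construction of $\pi(f)=\pi(f,\ell)$; other choices change everything only by an inner automorphism (Theorem~\ref{2.3}), so it suffices to treat one. Given an $f$-transverse loop $j\:(I,\partial I)\to(M,b)$, put $\hat j=\ell j\bar\ell$ and $\hat J=(\hat j^*f)^{-1}(I)$; pulling $\Gamma_f$ back along $\hat j$ simply realizes $\hat J$ as an embedded submanifold of $I\x N\cong N\x I$, and its $d$ positive arcs run (matched by $h_{j,D}$) from $D([d])\x\{0\}$ to $D([d])\x\{1\}$. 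After a preliminary perturbation pushing their interiors off $N\x\partial I$, these $d$ arcs form a string link $\Theta_j$ of multiplicity $d$ in $N\x I$ --- the circles and the non-positive arcs of $\hat J$ being irrelevant --- whose underlying permutation is $h_{j,D}=\phi_f([j])$. It then remains to see that $[j]\mapsto[\Theta_j]\in T_d(N)$ is a well-defined homomorphism on $\pi(f)$ which the evident map $T_d(N)\to S_d$ carries to $\phi_f$. Well-definedness I would get by pulling $\Gamma_f$ back along a $(b,f)$-coherent homotopy from $j_0$ to $j_1$: passed to $\hat j_t$, this becomes an embedding of $(\hat h^*f)^{-1}$ in $N\x I\x I$ whose components meeting the basepoint fibre are, by the coherence condition, annuli with one boundary arc in $\hat J_0$ and one in $\hat J_1$, and the $d$ of these carried by the positive arcs should assemble into $d$ disjoint embedded product cobordisms $I\x I$ exhibiting a concordance $\Theta_{j_0}\sim\Theta_{j_1}$. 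The homomorphism property I would verify exactly as for $\phi_f$ (the argument following Lemma~\ref{2.2}), now carrying along the pulled-back embedding and using that pullback over a concatenation of paths is the concatenation of pullbacks, the intermediate $\bar\ell\ell$ that arises when $\widehat{jj'}$ is compared with $\hat j\hat j'$ being absorbed up to concordance --- the string-link refinement of the coherent nullhomotopy of $\bar\ell\ell$ already implicit in Theorem~\ref{2.3}.

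The covering-space bookkeeping in (a) and the general-position and perturbation steps in (b) are routine. The crux is the concordance step in (b): identifying the coherent-homotopy relation that defines $\pi(f)$ with string-link concordance. One has to check that the annular components delivered by the coherence condition are precisely those attached to the $d$ positive arcs, and that they are embedded and \emph{pairwise disjoint} in $N\x I\x I$, so that they constitute a genuine string-link concordance and not merely an abstract bordism --- which is exactly why the coherence condition in the definition of $\pi(f)$ controls both the genus and the boundary behaviour of the components through the basepoint fibre. The companion point, that the $\bar\ell\ell$-insertion is trivial up to concordance, is a string-link version of the same phenomenon and I expect it to be dispatched by the methods behind Theorem~\ref{2.3}.
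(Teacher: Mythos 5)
Your proposal is correct and takes essentially the same route as the paper: the paper deduces Proposition \ref{2.9} precisely from the observation that $f$ factors as the graph embedding $\Gamma_f\:N\emb N\x M$ followed by the projection $N\x M\to M$, and then runs the proofs of Proposition \ref{2.6} and of the ``only if'' part of Theorem \ref{2.7} with the fibre $\R^2$ replaced by $N$. Your write-up only supplies the routine details (positive arcs, coherence yielding disjoint embedded annuli, the $\bar\ell\ell$ insertion) that the paper leaves implicit in its one-line ``Hence''.
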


Le Dimet showed that the natural map $B_d\to T_d$ is injective, i.e.\ concordant braids are isotopic
(cf.\ \cite{Hi}*{p.\ 312}).
Indeed, the Artin representation $B_d\to\Aut(F_d)$ is injective and agrees
with the representation $T_d\to\Aut(F_d/\gamma_n)$,
which is well-defined for each $n$ by the Stallings Theorem on
the lower central series $\gamma_n$ (see \cite{HL2}*{\S1}).
But $\bigcap\ker[\Aut(F_d)\to\Aut(F_d/\gamma_n)]=1$ since
$\bigcap\gamma_n=1$ in $F_d$.

On the other hand, $B_d$ and $T_d$ have a common quotient, the homotopy braid
group $HB_d$ (see \cite{Go}, where the difference between $B_d$ and $HB_d$
is explained).
Indeed, every string link is link homotopic to a braid (see \cite{HL1}) and
concordance implies link homotopy by a well-known result of Giffen and
Goldsmith.
The latter also follows from the injectivity of $HB_d\to\Aut(F_d/\mu_0)$
\cite{HL1}, where $\mu_0$ is the product of the commutator subgroups of
the normal closures of the generators of $F_d$, which contains
$\gamma_{d+1}$.

Similarly to Artin's combing
$P_d\simeq (\dots(F_1\ltimes F_2)\ltimes\dots)\ltimes F_{d-1}$ of the pure
braid group, the kernel $HP_d$ of the projection $HB_d\to S_d$ admits
the combing $HP_d\simeq
(\dots(F_1/\mu_0\ltimes F_2/\mu_0)\ltimes\dots)\ltimes F_{d-1}/\mu_0$
\cite{Go}, \cite{HL1}.
Hence $HP_d$ is torsion-free.
Using this, Humphries proved that $HB_d$ is torsion-free for $d<7$;
in fact he showed that $\alpha\in HB_d$ has infinite order if its image in
$S_d$ has order divisible by $2$, $3$, or $5$ \cite{Hu}.

\begin{corollary}\label{2.10} Let $f$ be a generic smooth map between compact
connected oriented $n$-manifolds, $n\ge 2$.
If $\pi(f)$ contains an element $\alpha$ of finite order whose monodromy
$\phi_f(\alpha)\in S_{|\Deg(f)|}$ is of order divisible by $2$, $3$ or $5$,
then $f$ is not a $2$-prem.
\end{corollary}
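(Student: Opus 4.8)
The plan is to argue by contradiction. Suppose $f$ is a $2$-prem and put $d=|\Deg f|$. By Proposition \ref{2.6} the monodromy $\phi_f\:\pi(f)\to S_d$ then factors through the projection $T_d\to S_d$; fix a homomorphism $\psi\:\pi(f)\to T_d$ whose composition with $T_d\to S_d$ equals $\phi_f$.

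Next I would push $\psi$ forward into the homotopy braid group. As recalled just before the statement, concordance implies link homotopy (Giffen--Goldsmith) and every string link is link homotopic to a braid, so there is a natural surjection $q\:T_d\twoheadrightarrow HB_d$; moreover the permutation recording the endpoints of a string link is a link-homotopy invariant, so the projection $T_d\to S_d$ factors through $q$ and the natural projection $HB_d\to S_d$. Setting $\rho:=q\circ\psi\:\pi(f)\to HB_d$, the composition of $\rho$ with $HB_d\to S_d$ is then exactly $\phi_f$.

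Now let $\alpha\in\pi(f)$ be as in the hypothesis: it has finite order, and $\phi_f(\alpha)\in S_d$ has order divisible by $2$, $3$ or $5$. Since $\rho$ is a homomorphism, $\rho(\alpha)$ has finite order in $HB_d$; but the image of $\rho(\alpha)$ in $S_d$ is $\phi_f(\alpha)$, of order divisible by $2$, $3$ or $5$, so by Humphries' theorem \cite{Hu}, in the refined form quoted above, $\rho(\alpha)$ has infinite order. This contradiction shows that $f$ is not a $2$-prem.

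Almost all of the substance is already contained in Proposition \ref{2.6} and in the cited facts about string links, braids and the homotopy braid group; the corollary is just the assembly of the chain $\pi(f)\to HB_d\to S_d$ factoring $\phi_f$, together with Humphries' torsion result for $HB_d$. The one step I would take some care over is checking that all the maps to $S_d$ along this chain agree, so that the image of $\rho(\alpha)$ in $S_d$ is genuinely $\phi_f(\alpha)$; this compatibility is precisely what lets the hypothesis on the order of $\phi_f(\alpha)$ enter. Note that no bound on $d$ is needed here, since the version of Humphries' result being used is valid for all $d$; the restriction $d<7$ appearing in Theorem \ref{1.3} reflects only the fact that for $d<7$ every nontrivial element of $S_d$ has order divisible by $2$, $3$ or $5$.
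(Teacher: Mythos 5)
Your proof is correct and is exactly the argument the paper intends: the corollary is stated without a separate proof precisely because it is the assembly of Proposition \ref{2.6}, the surjection $T_d\twoheadrightarrow HB_d$ (concordance implies link homotopy, and string links are link homotopic to braids) compatible with the projections to $S_d$, and Humphries' refined torsion result \cite{Hu}. Your closing remark about why Theorem \ref{1.3} needs $d<7$ also matches the paper's reasoning.
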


Note that for the hypothesis to hold, the group $HT_{|\Deg(f)|}(N)$ of
link homotopy classes of string links in $N\x I$ must contain torsion,
by Proposition \ref{2.9}(b).

Taking into account the Yamamoto--Akhmetiev Theorem \cite{M1}, we have

\begin{corollary}\label{2.11} If $f$ is a generic smooth map from $S^2$ to a closed
orientable surface then $\pi(f)$ contains no torsion with monodromy of order
divisible by $2$, $3$ or $5$.
\end{corollary}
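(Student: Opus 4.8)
The plan is to deduce this directly from Corollary \ref{2.10} together with the Yamamoto--Akhmetiev Theorem \cite{M1}. Recall from \S\ref{1} that the latter asserts that every generic smooth map from $S^2$ into an orientable $2$-manifold is a $2$-prem. First I would check that the hypotheses of Corollary \ref{2.10} are met in the present situation: $S^2$ and the target surface are compact, connected, oriented $2$-manifolds, so indeed $n=2\ge 2$, and $f$ has some degree $\Deg f$ with $|\Deg f|<\infty$.

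Then I would argue by contradiction. Suppose $\pi(f)$ contains an element $\alpha$ of finite order whose monodromy $\phi_f(\alpha)\in S_{|\Deg f|}$ has order divisible by $2$, $3$, or $5$. Corollary \ref{2.10} then applies verbatim and yields that $f$ is not a $2$-prem. This contradicts the Yamamoto--Akhmetiev Theorem. Hence no such $\alpha$ can exist, which is exactly the assertion.

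Since both inputs are already established, there is essentially no obstacle here; whatever difficulty there is has been absorbed into the proof of Corollary \ref{2.10} (which rests on Propositions \ref{2.6} and \ref{2.9}, the injectivity and torsion-freeness statements for $HB_d$ and $HP_d$ in the range $d<7$, and Humphries' theorem) and into the proof of the Yamamoto--Akhmetiev Theorem in \cite{M1}. For completeness one may additionally record, in the spirit of the remark following Corollary \ref{2.10}, that the conclusion also says that the relevant torsion is absent from $HT_{|\Deg f|}(S^2)$ by Proposition \ref{2.9}(b); but that observation is a byproduct rather than a step of the argument.
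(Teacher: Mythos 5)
Your argument is exactly the paper's: Corollary \ref{2.11} is stated as an immediate consequence of Corollary \ref{2.10} combined with the Yamamoto--Akhmetiev Theorem from \cite{M1} (every generic smooth map of $S^2$ into an orientable surface is a $2$-prem), and your contradiction argument is just the contrapositive reading of that implication. The proposal is correct and coincides with the intended proof.
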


An interesting question is whether already some lower central series quotient
of $HB_d$ is torsion free.
For instance, the abelianization of $HB_3$ is not: the braid
$[\sigma_{12},\sigma_{23}]$ projects nontrivially to $S_3$, but it is easy to
check that the three strands of the pure braid $[\sigma_{12},\sigma_{23}]^3$
are pairwise unlinked.
This means, in particular, that it would not be a good idea to simplify
the definition of $(b,f)$-coherent homotopy into ``link map bordism'', i.e.\ to
allow the positive components of the preimage to change by pairwise
disjoint bordisms.

\begin{remark}\label{2.12}
It was shown by Habegger and Lin \cite{HL2}*{\S1} that 
\begin{enumerate}[label=(\roman*)]
\item the image
of the group $C_d$ of concordance classes of pure string links in
$\Aut(F_d/\gamma_n)$ is the subgroup $\Aut_0(F_d/\gamma_n)$ that
depends on the chosen set $\{x_1,\dots,x_n\}$ of free generators of $F_d$ and
consists of those automorphisms that send the coset $\bar x_i$ of each
$x_i$ to a conjugate of $\bar x_i$ and fix the product
$\bar x_1\cdots\bar x_d$; 
\item $\Aut_0(F_d/\gamma_2)=1$, and each
$\Aut_0(F_d/\gamma_{n+1})$ is a central extension of $\Aut_0(F_d/\gamma_n)$
by a free abelian group, which they denote $K_{n-1}$; in particular, for $n>2$,
$\Aut_0(F_d/\gamma_n)$ is torsion-free and nilpotent of class $n-2$.
\end{enumerate}

It follows easily from these that
\begin{enumerate}[label=(\roman*$'$)]
\item the image of $T_d$ in
$\Aut(F_d/\gamma_n)$ is the subgroup $\Aut_1(F_d/\gamma_n)$ consisting of those
automorphisms that send the coset $\bar x_i$ of each $x_i$ to a conjugate of
some $\bar x_j$ and fix the product $\bar x_1\cdots\bar x_d$;
\item $\Aut_1(F_d/\gamma_2)\simeq S_d$, and for $n>1$, each
$\Aut_1(F_d/\gamma_{n+1})$ is a central extension of $\Aut_1(F_d/\gamma_n)$ by
the same free abelian group $K_{n-1}$.
\end{enumerate}

Thus the homomorphism $T_d\to S_d$ factors through the limit of the inverse
sequence $\dots\to\Aut_1(F_d/\gamma_3)\to\Aut_1(F_d/\gamma_2)\simeq S_d$.
If some term of this inverse sequence or the inverse limit is torsion-free
(for each $d$) --- or if $HB_d$ is torsion-free for all $d$ --- then
the restriction on the order of the monodromy is superfluous in
Corollary \ref{2.10}.
\end{remark}

\section{Some computations of $\pi(f)$}\label{3}

\subsection{A certain fold map $S^n\to S^n$ of degree $d$}

Let $f$ be the degree $d$ map $f\:S^n\to S^n$ defined by picking $d+1$ disjoint
$n$-disks in $S^n$ and sending each of them homeomorphically to its own exterior in $S^n$.
Let $b$ be a point with $|f^{-1}(b)|=d$ (i.e.\ a point in the interior of
one of the disks).

It is easy to see that $f$ lifts to an embedding $f\x g\:S^n\emb S^n\x\R^2$.
Namely, fix an embedding of $pt*[d+1]$ (the cone over $[d+1]=\{0,\dots,d\}$)
into $\R^2$, and let $g\:S^n\to pt*[d+1]\subset\R^2$ send the interior of the $i$th
$n$-disks into $pt*\{i\}\but pt*\emptyset$, and the exterior of the disks
into $pt*\emptyset$.

\begin{example}[the case $d=2$]\label{3.1.1}
Let $\alpha\in\pi(f,b)$ be the class of a loop intersecting each disk along its
diameter (compare Example 6 in \cite{M1}).
Then $\alpha$ is nontrivial since it is easily seen to have a nontrivial
monodromy $\phi_f(\alpha)\in S_2$.
On the other hand, by Proposition \ref{2.6} the monodromy map $\phi_f$ lifts to
$\hat\phi_f\:\pi(f,b)\to T_2$.
Since $T_2\to S_2$ factors through $HB_2$, we conclude that
the image of $\hat\phi_f(\alpha)$ in $HB_2\simeq\Z$ is nontrivial.
Hence $\alpha$ has infinite order, and the composition
$\Z\simeq\left<\alpha\right>\subset\pi(f,b)\to HB_2\simeq\Z$ is an isomorphism.
Thus $\pi(f)$ contains a direct summand isomorphic to $\Z$.
\end{example}

\begin{example}[the case $d=2$, $n>2$]\label{3.1.1.1}
We will now show that if $n>2$ (and still $d=2$), then $\pi(f)$ is isomorphic
to $\Z$.

Let $A$, $B$ and $C$ denote the $3$ disks, with $b\in B$.
A loop representing an element of $\pi(f,b)$ gives rise to a word in
the alphabet $\{A,B,C\}$ (starting and ending with the letter $B$), which
encodes the sequence of disks intersected by the loop.
If two loops give rise to the same word, then (using that $n\ge 3$)
they represent the same element of $\pi(f,b)$.
Furthermore, it is easy to see%
\footnote{The author is grateful to P. M. Akhmetiev for pointing out these
relations.} that $XX=X$ and $XYX=X$ in $\pi(f,b)$ for any $X,Y\in\{A,B,C\}$.

Let $F$ be the free monoid (=semi-group with $1$) on the alphabet $A,B,C$
(where the product of words is given by concatenation), and let $BFB$ be
the submonoid of $F$ consisting of all words of the form $BwB$, where
$w\in F$.
Let $G$ be the quotient of $BFB$ by the relations $XYX=X=XX$, where
$X,Y\in\{A,B,C\}$.
Then $G$ is a group with unit $B$ and with the inverse given by
$BX_1\dots X_nB\mapsto BX_n\dots X_1B$.
Indeed, $BX_n\dots X_1BBX_1\dots X_nB=BX_n\dots X_1BX_1\dots X_nB=
BX_n\dots X_1\dots X_nB=\dots=B$.

In fact, $G$ is nothing but the group of simplicial loops in the triangle
$\partial\Delta^2$ (with vertices $A$, $B$, $C$) under the relation of simplicial homotopy.
Thus $G\simeq\Z$, with $n\in\Z$ corresponding to the class of
$B(ABC)^nB$ (where $(ABC)^{-1}=CBA$).
By construction, we have an epimorphism $G\to\pi(f,b)$ sending a generator
onto $\alpha$.
Hence $\pi(f)\simeq\Z$.
\end{example}

\begin{example}[$n=2$, $d=2$]\label{3.1.1.2}
In the case $n=2$, $d=2$, $\pi(f)$ is larger than $\Z$, for similarly to
Example \ref{3.2.2} below it can be shown that there are loops giving rise to
the words $B$, $BAB$ and $BCB$ yet representing elements of infinite order
in $\pi(f)/\Z$.
\end{example}

\begin{example}[$n>2$, $d>2$]\label{3.1.2}
In the case $d>2$, $n>2$, the same considerations as in \ref{3.1.1.1} show that
$\pi(f)$ is a quotient of $\pi_1((\Delta^{d+1})^{(1)})$ (that is, of the free group
on $\frac{d(d-1)}2$ letters), and admits an epimorphism onto the homotopy braid
group $HB_d$.
\end{example}

\subsection{Fold maps of geometric degree $1$ with embedded spherical folds}\label{3.2}
Let $f\:S^n\to S^n$ be a generic fold map that embeds its fold surface
$\Sigma_f:=\{x\in S^n\mid\ker df_x\ne 0\}$ and is such that $|f^{-1}(b)|=1$;
in particular, $f$ has degree $\pm 1$.
({\it Fold map} means that every point of $\Sigma_f$ is a fold point, rather than
a point of a higher singularity type.)
In the case $n>2$, let us additionally assume that each component $S_i$ of
$\Sigma_f$ is a sphere.

The following notation will be used.
If $S_i$ is a component of $\Sigma_f$, let $B_i$ be the $n$-ball bounded
by $S_i$ in $S^n\but f^{-1}(b)$.
On the other hand, let $D_i$ be the $n$-ball bounded by $f(S_i)$ in
$S^n\but b$.
Each $f(B_i)$ is connected, hence coincides with some $D_{\rho(i)}$, таким что $D_i\subset D_{\rho(i)}$.
Note that $\rho\rho=\rho$; in other words if $j$ is of the form $\rho(i)$,
then $f(B_j)=D_j$.
In particular, in this case $S_j$ is {\it outer}, i.e.\ $f$ sends
a neighborhood of $S_j$ in $B_j$ into $D_j$.
Accordingly, we call an $S_i$ {\it inner} if $f$ sends such a neighborhood into
the closure of $S^2\but D_i$.

\begin{example}[a two-dimensional example]\label{3.2.1} There exists a generic fold map
$f\:S^2\to S^2$ such that $f$ embeds $\Sigma_f$ and $|f^{-1}(b)|=1$, yet
$\pi(f,b)$ is non-trivial.

Namely, $f$ is the unique (up to reparametrization) map such that $\Sigma_f$ is
the union of four curves $S_1$, $S_2$, $S_3=S_{\rho(1)}$ and $S_4=S_{\rho(2)}$
such that $D_1\cap D_2=\emptyset$ and $D_1\cup D_2\subset D_3\subset D_4$.

Let $l\:(S^1,pt)\to(S^2,b)$ be a loop intersecting $D_3$ by a diameter that
separates $D_1$ from $D_2$ within $D_3$, and intersecting $D_4$ by some diameter (which contains
the former diameter).
Then the pullback $(l^*f)^{-1}(S^1)$ consists of three components $P_0$, $P_1$ and $P_2$,
with $P_0$ containing $(l^*f)^{-1}(pt)$ and with $(f^*l)(P_1)$ and $(f^*l)(P_2)$
contained respectively in $B_1$ and $B_2$.
By a homotopy $h$ from $l$ to an $\tl l$ with values in $S^2\but (D_1\cup D_2)$
one cannot eliminate either $P_1$ or $P_2$; that is, neither $P_1$ nor $P_2$
bounds a disk in $(h^*f)^{-1}(S^1\x I)$.
On the other hand, any such homotopy $h$ with values not only in
$S^2\but (D_1\cup D_2)$ joins either $P_1$ or $P_2$ to $P_0$;
that is, either $P_1$ or $P_2$ (or both) belongs to the component $Q$ of
$(h^*f)^{-1}(S^1\x I)$ containing $P_0$.
Hence $Q$ has at least two boundary components in $(l^*f)^{-1}(S^1)$, and
so $h$ is not coherent.
Thus one cannot eliminate either $P_1$ or $P_2$ by a coherent homotopy.
So there exists no coherent null-homotopy of $l$.

Similar considerations show that no power of $l$ is trivial in $\pi(f,b)$.
\end{example}

\begin{example}[a mild generalization]\label{3.2.2}
Let us generalize Example \ref{3.2.1} to show that if (under the hypothesis of \S\ref{3.2})
there exists a pair $(i,j)$ such that $D_i\subset f(B_j)$ and $D_j\subset f(B_i)$
(for brevity, we shall call such a pair $(i,j)$ {\it linked}) and additionally
$B_1\cap B_2=\emptyset$, then $\pi(f,b)$ is nontrivial as long as $n=2$.

Indeed, up to renumbering we may assume that $(i,j)=(1,2)$.
Since $(1,2)$ is linked, $\rho(1)\ne 1$.
Then, in particular, $S_{\rho(1)}$ is outer.
Hence if $S_1$ is also outer, then there exists an $i$ such that
$B_1\supset B_i\supset B_{\rho(1)}$ and $S_i$ is inner.
Then $\rho(i)=\rho(1)$ and $(i,2)$ is a linked pair.
Thus without loss of generality we may assume that $S_1$ is inner; similarly for $S_2$.

The construction of Example \ref{3.2.1} will apply here once we show that there is
a loop $l\:(S^1,pt)\to (S^2\but (D_1\cup D_2),b)$ such that the component
$L^x$ of $L:=(l^*f)^{-1}(S^1)$ that contains $(l^*f)^{-1}(pt)$ represents,
via $f^*l$, a nontrivial element of $H_1(S^2\but (B_1\cup B_2))$.
Let $B$ be the union of all disks bounded by the collection of circles
$f^{-1}(f(S_1\cup S_2))$ in the complement to $x:=f^{-1}(b)$.
Let $S_1^+$ be a pushoff of $S_1$ into the complement of $B_1$, and let
$y$ be a point of $S_1^+$.
Since $S_1$ is inner, $y\notin B$; and since $S^2\but B$ is connected,
$x$ and $y$ can be joined by a path $p$ in $S^2\but B$.
By Lemma \ref{2.5} there exists a loop $l_0\:(S^1,pt)\to(S^2,b)$ such that
$x_0:=(l_0^*f)^{-1}(pt)=(f^*l_0)^{-1}(x)$ and $y_0:=(f^*l_0)^{-1}(y)$ are
singletons and lie in the same component $L^x_0$ of $L_0:=(l_0^*f)^{-1}(S^1)$.
Moreover, by the proof of Lemma \ref{2.5} (found in \cite{M1}) we may
assume that $l_0$ has values in a neighborhood of $f(p(I))$, hence in
$S^2\but (D_1\cup D_2)$.
Then $f^*l_0$ sends $L^x_0$ into $S^2\but (B_1\cup B_2)$.
Now amend $l_0$ by cutting it open at $f(y)$ and inserting a loop circling
around $f(S_1^+)$.
Then the resulting loop $l_1\:(S^1,pt)\to (S^2\but (D_1\cup D_2),b)$ is
such that the component $L_1^x$ of $L_1:=(l_1^*f)^{-1}(S^1)$ that contains
$x_1:=(l_0^*f)^{-1}(pt)$ differs from $L_0^x$ by a loop circling around $S^1_+$.
Thus $f^*l_0|_{L_0^x}$ and $f^*l_1|_{L_1^x}$ represent distinct elements
of $H_1(S^2\but (B_1\cup B_2))$; so at least one of them is non-trivial.
\end{example}

\begin{example}[a higher-dimensional proposition]\label{3.2.3} We shall show that
the phenomenon exhibited in Example \ref{3.2.1} does not occur in higher dimensions;
more specifically, that (under the hypothesis of \S\ref{3.2}) for $n>2$, a $b$-based
$f$-transverse loop crossing each $S_i$ at most twice represents the trivial
element of $\pi(f,b)$.

Indeed, let $l\:(S^1,pt)\to (S^n,b)$ be such a loop.
Since $n>2$, it may be assumed to be embedded.
(This assumption will not be essentially used, but allows to simplify notation.)
Since $l$ is $f$-transverse, it is transverse to the codimension one submanifold
$f(\Sigma_f)$.
Up to a renumbering of $S_i$'s, we may assume that $D_1$ meets $l(S^1)$ and
is innermost among all the balls $D_i$ in $S^n\but b$ that meet $l(S^1)$.
Write $A=l(S^1)\cap D_1$; by our hypothesis, it is an arc.
Then $\partial A$ bounds an arc $A'$ in $f(S_1)$ so that $A'$ meets $l(S^1)$
only in $\partial A'=\partial A$.
The circle $A\cup A'$ bounds a $2$-disk $D$ in $D_1$, meeting $\partial D_1$
only in $A'$.
Without loss of generality $D$ meets $l(S^1)$ only in $A$.
Since $n>2$, we may assume that $D$ is disjoint from all $D_j$ that lie in
the interior of $D_1$.
Let us homotop $l$ across $D$, from $A$ to $A'$, to a loop $l_1$ such that
$l_1(S^1)\cap D_1$ has fewer components than $l(S^1)\cap D_1$.
Proceeding inductively, we obtain a pointed homotopy $h_t$, $t\in [0,\dots,N]$,
from $l_0:=l$, through loops $l_1,l_2,\dots$, to a loop $l_N$ disjoint from
every $D_i$.
Since $n>2$, the latter is pointed null-homotopic with values in
the complement to all $D_i$'s and so represents the trivial element of $\pi(f)$.

Let us show that the constructed null-homotopy of $l$ is coherent.
Let $L_i=(l_i^*f)^{-1}(S^1)$.
If $S_{i+1}$ is outer, then $L_{i+1}$ is obtained from $L_i$ by removing
one component, disjoint from $(l_i^*f)^{-1}(pt)$.
Else (i.e.\ if $S_{i+1}$ is inner) $L_{i+1}$ is obtained from $L_i$ by
splitting one of the components into two.
(Here we are using our hypothesis, implying by induction that $l_i$ crosses
$S_{i+1}$ just twice.)
The component of $L_i$ being split may contain $(l_i^*f)^{-1}(pt)$, in which
case of the two resulting components $P$, $Q$ of $L_{i+1}$ one (say, $P$) would
contain $(l_{i+1}^*f)^{-1}(pt)$.
Then for the constructed null-homotopy to be coherent, the other component $Q$
has to be glued up by a disk in $(h_{[i+1,N]}^*f)^{-1}(S^1\x I)$, where
$h_{[i,j]}\:S^1\x I\to N$ denotes the interval from $l_i$ to $l_j$ in
the constructed null-homotopy $h_t$.
Indeed, by construction, $Q$ will be glued up by a disk already in
$(h_{[i+1,\rho(i)]}^*f)^{-1}(S^1\x I)$.
\end{example}

\begin{example}[a two-dimensional proposition]\label{3.2.5}
Let us show that the proposition in Example \ref{3.2.3} remains valid in dimension two under
the additional hypothesis that there are no linked pairs $(i,j)$.

Indeed, let us examine the argument of \ref{3.2.3}.
It contains only two essential applications of the condition $n>2$:
to conclude that $D$ is disjoint from all $D_j$'s that lie in the interior
of $D_1$ and to coherently null-homotop $l_N$.
Now if $n=2$ and $D$ meets some $D_j$, then it follows from our assumption
of $(1,j)$ being unlinked that $D$ contains $f(B_j)$.
(For if it doesn't contain, then taking into account our assumption that $D_1$
is innermost among all $D_i$'s meeting $l(S^1)$, the only possibility is that
$f(B_j)$ contains $D_1$.
However $D_j\subset D_1\subset f(B_1)$, so $(1,j)$ is linked.)
Thus $D$ contains each $D_j$ together with its $f(B_j)$, which implies
that $f^{-1}(D)$ is homeomorphic to a disjoint union of disks, each
bounded by a component of $f^{-1}(\partial D)$.
Thus the homotopy of $l$ along $D$ will be coherent.
Similarly the null-homotopy of $l_N$ will be coherent.
\end{example}

\begin{center}
\smallskip

\includegraphics{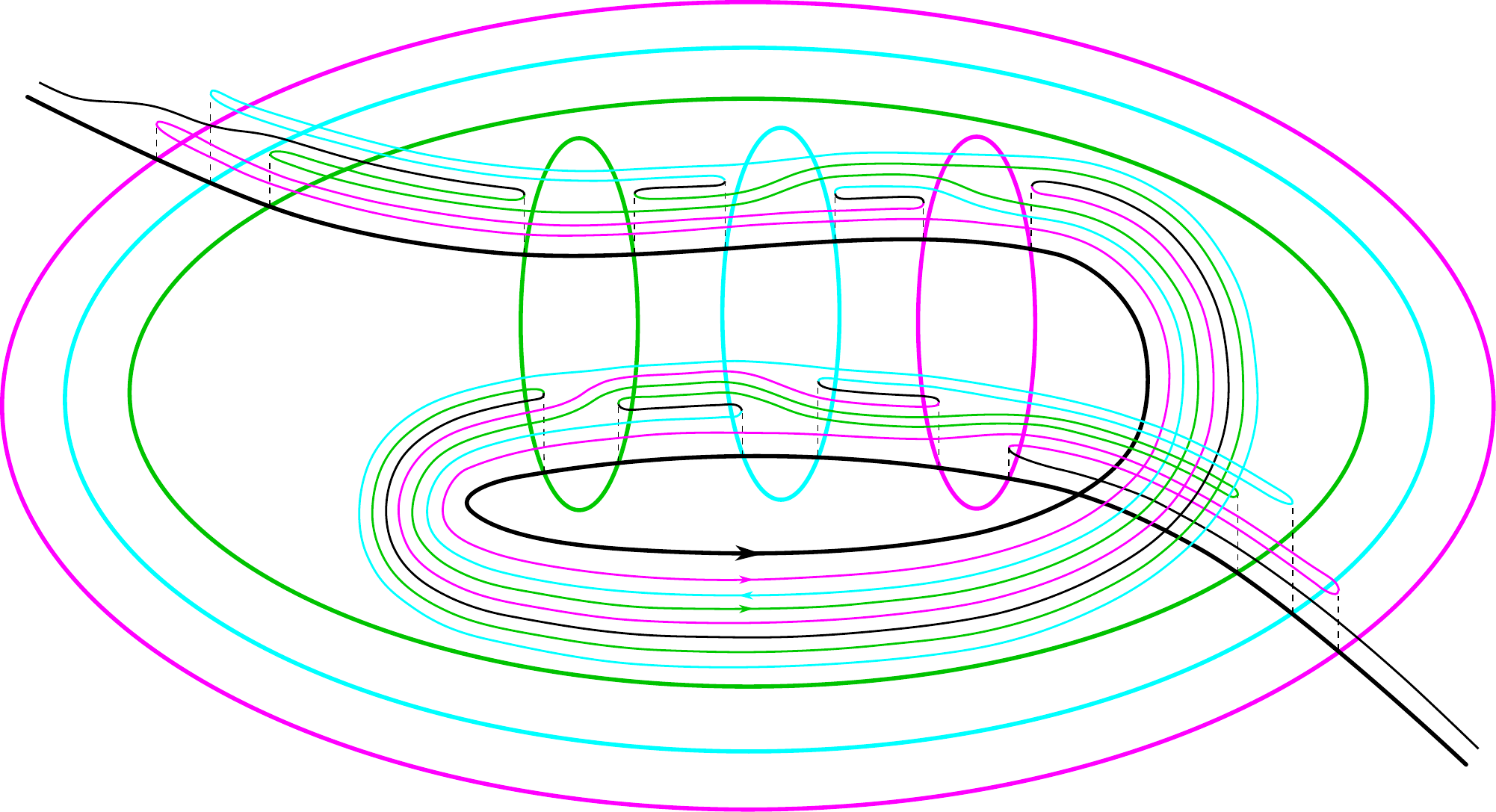}
\smallskip

{\bf Fig.\ 1.} A map $f\:S^n\to S^n$ of geometric degree $1$ with nontrivial $\pi(f)$.
\end{center}
\bigskip

\begin{example}[a higher-dimensional example]\label{3.2.4}
In fact, $\pi(f,b)$ need not be trivial for $n>2$ (under the hypothesis of \S\ref{3.2})
as shown by the map depicted in Figure 1.
The six thick colored (or grayscale, depending on the reader's medium) ellipses depict the folds; 
the thick black curve is the loop $l$ in question; and the thin curves illustrate the pullback $l^*f$.
The arrows mark those of the two components of $(l^*f)^{-1}(S^1)$ that passes through $(l^*f)^{-1}(pt)$.

The curve $l(S^1)$ meets each of the three $n$-balls $D_i$ bounded by the images of the three inner spheres
of folds $S_i$, $i=1,2,3$, in two arcs $J_i$, $J_i'$.
A key feature of this picture is that the marked component has a fold over one endpoint of each of 
the six arcs, and the other (unmarked) component has folds over their opposite endpoints.
Thus if any of the six arcs is eliminated as in Example \ref{3.2.3}, this would result in the marked
component being joined to the unmarked one, whence the eliminating homotopy would fail to be coherent
(cf.\ Example \ref{3.2.1}).

Moreover, no preliminary tampering with the six arcs by a coherent homotopy of $l$ is going to help.
Indeed, if $H\:D^2\to S^n$ is a coherent 
null-homotopy of $l$, then the pullback $H^*f$ of $f$ has a fold curve over each $H^{-1}(S_i)$.
Each $H^{-1}(D_i)$ is a codimension zero
submanifold in $D^2$, whose boundary contains the two arcs $l^{-1}(J_i)$ and $l^{-1}(J_i')$.
If $j\ne i$, then these arcs alternate with $l^{-1}(J_j)$ and $l^{-1}(J_j')$ with respect to the cyclic
order on $S^1$, whereas $H^{-1}(D_i)$ and $H^{-1}(D_j)$ are disjoint.
Hence either $l^{-1}(J_i)$ and $l^{-1}(J_i')$ are contained in different components of $H^{-1}(D_i)$,
or $l^{-1}(J_j)$ and $l^{-1}(J_j')$ are contained in different components of $H^{-1}(D_j)$ (or both assertions
hold).
By symmetry, we may assume the former.
Since the component of $H^{-1}(D_i)$ containing $l^{-1}(J_i)$ does not contain $l^{-1}(J_i')$, its boundary
component containing the arc $l^{-1}(J_i)$ otherwise contains only points of $H^{-1}(S_i)$, which therefore
must constitute an arc. 
Thus the two endpoints of the arc $l^{-1}(J_i)$ belong to the same component of $H^{-1}(S_i)$.
Hence the fold curve of $H^*f$ over this component constitutes a path in $(H^*f)^{-1}(D^2)$ starting on 
the marked component of $(l^*f)^{-1}(S^1)$ and ending on the other (unmarked) component.
Therefore the latter two components are joined into one in the null-homotopy, which is therefore non-coherent.
Thus $l$ is not coherently null-homotopic.
\end{example}

\end{document}